%% file: main.tex
\documentclass[11pt,onecolumn]{article}

\usepackage{a4wide}
\usepackage{amsmath,amsthm,epsfig,amssymb,amsbsy}
\usepackage{enumerate}
\usepackage{comment}
\usepackage{algorithm}
\usepackage{amsfonts}       %
\usepackage{dsfont}
\usepackage[inline]{enumitem}
\usepackage{amssymb}
\usepackage{mathtools}
\usepackage{cases}

\usepackage{algorithmic}

\usepackage{nicefrac}

\usepackage{subcaption}
\usepackage{graphicx}

\usepackage{pgfplots}
\pgfplotsset{compat=1.18}

\usepackage{pgfplotstable}
\usepackage{multirow}

\usepackage{adjustbox}

\usepackage[giveninits=true,maxbibnames=9,maxcitenames=2,backend=biber,url=false,doi=false,isbn=false]{biblatex}
\newenvironment{keywords}{\begin{paragraph}{Keywords:}}{\end{paragraph}}

\DeclareMathOperator*\minimize{minimize}

\DeclareMathOperator{\dom}{dom}

\DeclareMathOperator{\intr}{int}

\DeclareMathOperator{\id}{id}
\DeclareMathOperator{\barsgn}{\overline{sgn}}
\DeclareMathOperator{\arcsinh}{arcsinh}

\usepackage{booktabs,dcolumn,caption}
\newcolumntype{d}[1]{D{.}{.}{#1}} %

\newcommand{\bR}{\mathbb{R}}

\newcommand{\bN}{\mathbb{N}}

\newcommand{\exR}{\overline{\mathbb{R}}}

\newcommand{\cC}{\mathcal{C}}

\newcommand{\cI}{\mathcal{I}}

\newcommand{\cB}{\mathcal{B}}

\newcommand{\cO}{\mathcal{O}}

\newcommand{\cS}{\mathcal{S}}

\newcommand\downto{\searrow}%

\newcommand\R{\mathbb{R}}
\newcommand\N{\mathbb{N}}

\newcommand\eqdef{:=}

\newcommand{\Cpp}{C\kern-0.04em+\kern-0.01em+}

\makeatletter
\newcommand{\aprox}[3][\@nil]{%
  \def\tmp{#1}%
   \ifx\tmp\@nnil
       \operatorname{prox}_{#3}^{#2}
    \else
         \operatorname{prox}_{#3}^{#1 \star #2}
    \fi}

\newcommand{\aenv}[3][\@nil]{%
  \def\tmp{#1}%
   \ifx\tmp\@nnil
       \operatorname{env}_{#3}^{#2}
    \else
         \operatorname{env}_{#3}^{#1 \star #2}
    \fi}

\newcommand{\bprox}[3][\@nil]{%
  \def\tmp{#1}%
   \ifx\tmp\@nnil
       \operatorname{bprox}_{#3}^{#2}
    \else
        \operatorname{bprox}_{#3}^{#1 #2}
    \fi}
\makeatother

\usepackage{hyperref}
\hypersetup{
  colorlinks=true,
  linkcolor=blue,
  filecolor=magenta,
  citecolor =magenta,  
  urlcolor=magenta,
  pdftitle={Newton methods beyond Hessian Lipschitz continuity}
}
\usepackage[]{cleveref}[0.19]

\crefname{section}{Section}{Sections}
\crefname{subsection}{Subsection}{Subsections}
\Crefname{section}{Section}{Sections}
\Crefname{subsection}{Subsection}{Subsections}

\Crefname{figure}{Figure}{Figures}

\crefformat{equation}{\textup{#2(#1)#3}}
\crefrangeformat{equation}{\textup{#3(#1)#4--#5(#2)#6}}
\crefmultiformat{equation}{\textup{#2(#1)#3}}{ and \textup{#2(#1)#3}}
{, \textup{#2(#1)#3}}{, and \textup{#2(#1)#3}}
\crefrangemultiformat{equation}{\textup{#3(#1)#4--#5(#2)#6}}%
{ and \textup{#3(#1)#4--#5(#2)#6}}{, \textup{#3(#1)#4--#5(#2)#6}}{, and \textup{#3(#1)#4--#5(#2)#6}}

\Crefformat{equation}{#2Equation~\textup{(#1)}#3}
\Crefrangeformat{equation}{Equations~\textup{#3(#1)#4--#5(#2)#6}}
\Crefmultiformat{equation}{Equations~\textup{#2(#1)#3}}{ and \textup{#2(#1)#3}}
{, \textup{#2(#1)#3}}{, and \textup{#2(#1)#3}}
\Crefrangemultiformat{equation}{Equations~\textup{#3(#1)#4--#5(#2)#6}}%
{ and \textup{#3(#1)#4--#5(#2)#6}}{, \textup{#3(#1)#4--#5(#2)#6}}{, and \textup{#3(#1)#4--#5(#2)#6}}

\newtheorem{theorem}{Theorem}[section]
\makeatletter
\newcommand{\settheoremtag}[1]{%
  \let\oldthetheorem\thetheorem%
  \renewcommand{\thetheorem}{#1}%
  \g@addto@macro\endtheorem{%
    \addtocounter{theorem}{-1}%
    \global\let\thetheorem\oldthetheorem}%
  }
\makeatother

\newtheorem{lemma}[theorem]{Lemma}
\newlist{lemenum}{enumerate}{1} %
\setlist[lemenum]{label=(\roman*), ref=\theproposition(\roman*), font=\rm}
\crefalias{lemenumi}{lemma}

\newlist{propenum}{enumerate}{1} %
\setlist[propenum]{label=(\roman*), ref=\theproposition(\roman*), font=\rm}
\crefalias{propenumi}{proposition} 

\newlist{theoremenum}{enumerate}{1} %
\setlist[theoremenum]{label=(\roman*), ref=\thetheorem(\roman*), font=\rm}
\crefalias{theoremenumi}{theorem} 

\newtheorem{definition}[theorem]{Definition}

\newtheorem{assumption}[theorem]{Assumption}

\newtheorem{example}[theorem]{Example}

\newtheorem{remark}[theorem]{Remark}

\newlist{assumenum}{enumerate}{1} %
\setlist[assumenum]{label=(\roman*), ref=\thetheorem(\roman*), font=\rm}
\crefalias{assumenumi}{assumption} 

\crefname{theorem}{Theorem}{Theorems}
\Crefname{theorem}{Theorem}{Theorems}
\crefname{lemma}{Lemma}{Lemmas}
\Crefname{lemma}{Lemma}{Lemmas}
\crefname{proposition}{Proposition}{Propositions}
\Crefname{proposition}{Proposition}{Propositions}
\crefname{corollary}{Corollary}{Corollaries}
\Crefname{corollary}{Corollary}{Corollaries}
\crefname{definition}{Definition}{Definitions}
\Crefname{definition}{Definition}{Definitions}
\crefname{fact}{Fact}{Facts}
\Crefname{fact}{Fact}{Facts}
\crefname{remark}{Remark}{Remarks}
\Crefname{remark}{Remark}{Remarks}
\crefname{example}{Example}{Examples}
\Crefname{example}{Example}{Examples}
\crefname{assumption}{Assumption}{Assumptions}
\Crefname{assumption}{Assumption}{Assumptions}
\crefname{algorithm}{Algorithm}{Algorithms}
\Crefname{algorithm}{Algorithm}{Algorithms}

\usepackage{xcolor}

\usepackage{microtype}

\usepackage{crossreftools}
\title{Newton methods beyond Hessian Lipschitz continuity:\\ A nonlinear preconditioning approach}
\author{Alexander Bodard\thanks{KU Leuven,
		Department of Electrical Engineering (ESAT-STADIUS),
		Kasteelpark Arenberg 10, 3001 Leuven, Belgium~
    {\tt%
            \href{mailto:alexander.bodard@esat.kuleuven.be}{\{alexander.bodard,}%
      			\href{mailto:panos.patrinos@esat.kuleuven.be}{panos.patrinos\}}%
			\href{mailto:alexander.bodard@esat.kuleuven.be,panos.patrinos@esat.kuleuven.be}{@esat.kuleuven.be}%
		}\\
    The authors are supported by the Research Foundation Flanders (FWO) research project G033822N and the Research Council KU Leuven C1 project with ID C14/24/103.
} \and Panagiotis Patrinos$^*$
}
 
\begin{document}

\maketitle

\begin{abstract}

\input{abstract.tex}
\end{abstract}

\begin{keywords}
Newton methods $\cdot$ generalized smoothness $\cdot$ nonlinear preconditioning 
\end{keywords}

\tableofcontents

\input{contents.tex}

\newpage
\printbibliography

\newpage
\appendix
\input{appendix.tex}

\end{document}

%% file: abstract.tex
Newton-type methods are typically analyzed under Lipschitz continuity of the
Hessian, an assumption that can fail for objectives with higher-order or
polynomial growth. We introduce a class of nonlinearly preconditioned Newton
methods that apply Newton's root-finding scheme to a transformed optimality
mapping, thereby extending recent nonlinear
preconditioning ideas from first-order methods to the second-order setting.
The resulting methods are naturally analyzed under Lipschitz continuity of a
preconditioned Hessian, a condition that significantly relaxes the classical
Hessian Lipschitz continuity assumption. Under this generalized smoothness model, we
establish local superlinear and quadratic convergence guarantees, and develop a
globalization strategy for the nonregularized method despite the fact that the
preconditioned Newton direction need not be a descent direction. We further
propose a regularized variant for isotropic preconditioners, and show that it attains an
$\cO(\varepsilon^{-3/2})$ iteration complexity. An adaptive version removes the
need to know the smoothness constant and allows inexact subproblem solutions
while preserving the same complexity order. 

%% file: contents.tex
\section{Introduction} \label{sec:intro}
\input{contents/introduction.tex}

\section{Nonlinearly preconditioned Newton} \label{sec:pn}
\input{contents/preconditioned_newton.tex}

\section{Regularization} \label{sec:cubic-pn}
\input{contents/cubic-regularization.tex}

\section{Numerical results}

\input{contents/numerics.tex}

\section{Conclusion} \label{sec:conclusion}

\input{contents/conclusion.tex}

%% file: contents/introduction.tex
We study unconstrained optimization problems of the form
\begin{equation} \label{eq:problem} \tag{P}
    \minimize_{x \in \R^n} f(x),
\end{equation}
where \(f : \R^n \to \R\) is a twice continuously differentiable and possibly nonconvex function for which \(\inf f > -\infty\).
Gradient descent constitutes an efficient solver for this class of problems, but oftentimes requires carefully tuned stepsizes to converge, and may suffer from problems like exploding gradients.
In response to this, practitioners developed various adaptive gradient schemes, like Adagrad \cite{duchi2011adaptive} and Adam \cite{kingma2014adam}, as well as gradient clipping techniques \cite{zhang_why_2020}.
It was recently shown \cite[\S 1.6]{oikonomidis_nonlinearly_2025} that all of these methods are encapsulated within a single framework of \emph{nonlinearly preconditioned gradient methods}, which, for a stepsize \(\gamma > 0\), has iterations
\begin{equation} \label{eq:pg} \tag{PG}
    x^{k+1} = x^k - \gamma \nabla \phi^*(\nabla f(x^k)).
\end{equation}
Here, \(\phi : \R^n \to \exR\) is referred to as the \emph{reference function} and \(\phi^*\), its convex conjugate, is the \emph{dual reference function}.
Note that \(\phi = \frac{1}{2} \Vert \cdot \Vert^2\) recovers classical gradient descent.

Under a generalized smoothness assumption, \eqref{eq:pg} can be analyzed from a majorization-minimization perspective, in the sense that at every iteration an upper bound of the form 
\begin{equation} \label{eq:majorizer}
    f(x) \leq f(x^k) + \tfrac{1}{L} \phi(L(x - y^k)) - \tfrac{1}{L} \phi(L(x^{k} - y^k))
\end{equation}
is minimized with respect to \(x\), where \(y^k = x^k - \frac{1}{L} \nabla \phi^*(\nabla f(x^k))\) and \(L = \nicefrac{1}{\gamma}\).
This inequality corresponds to a natural extension of Lipschitz smoothness, called \emph{anisotropic smoothness} \cite{laude_anisotropic_2025}.
Notably, it enables the construction of upper bounds that more tightly fit the properties of the objective function \(f\), since \(\phi\) may be chosen to grow faster than a quadratic function. 
Moreover, anisotropic smoothness encompasses the established notion of \((L_0, L_1)\)-smoothness \cite{zhang_why_2020}, and was recently shown to hold for key applications such as matrix factorization and phase retrieval \cite{bodard2025escaping}.

In this work, we propose a novel class of Newton methods for solving \eqref{eq:problem} that serves as a second-order counterpart to \eqref{eq:pg}, and we study its properties under anisotropic smoothness and related notions. 
Recall that Newton's method performs iterations \(x^{k+1} = x^k + d^k\), where the update directions satisfy
\begin{equation} \label{eq:n} \tag{N}
    \nabla^2 f(x^k) d^k = - \nabla f(x^k).
\end{equation}
Instead, we consider a generalization of this method where
\begin{equation} \label{eq:pn} \tag{PN}
    \nabla^2 \phi^*(\nabla f(x^k)) \nabla^2 f(x^k) d^k = - \nabla \phi^*(\nabla f(x^k)),
\end{equation}
This is nothing else than Newton's root-finding method applied to \(\nabla \phi^* \circ \nabla f\) instead of \(\nabla f\), and is justified under appropriate assumptions on the reference function \(\phi\) that guarantee for any \(x^\star \in \R^n\) the equivalence
\begin{equation} \label{eq:preconditioned-gradient}
    \nabla f(x^\star) = 0 \quad \Longleftrightarrow \quad \nabla \phi^*(\nabla f(x^\star)) = 0.
\end{equation}
Remark that Newton's method is \emph{affine invariant}, so linear preconditioning does not modify the update direction.
In contrast, the \emph{nonlinear preconditioning} \eqref{eq:pn} does.

By designing Newton variants based on generalized smoothness conditions that better reflect the properties of the cost function, we hope to obtain novel algorithms with improved practical performance, and, more generally, to narrow the gap between optimization theory and practice.

\paragraph{Contributions} Our contributions can be summarized as follows.

\begin{itemize}
    \item We propose a class of preconditioned Newton methods of which the update directions satisfy \eqref{eq:pn}.
    This scheme is naturally analyzed under a Lipschitz continuity assumption on the \emph{preconditioned} Hessian (cfr.\,\cref{as:prec-hessian-lipschitz}). 
    For appropriate reference functions this may relax the usual Hessian Lipschitz continuity condition.
    Notably, we show that \cref{as:prec-hessian-lipschitz} holds for any 1D polynomial objective.
    We characterize the region of quadratic convergence of \eqref{eq:pn} in terms of the potentially more favorable Lipschitz constant of the \emph{preconditioned Hessian}.
    Moreover, a globalization strategy of the scheme is proposed, which is non-trivial due to the update directions not being descent directions in general. 
    \item Restricting ourselves to isotropic reference functions, we then present a regularized variant that attains an \(\cO(\varepsilon^{-\nicefrac{3}{2}})\) complexity under this generalized smoothness condition.
    Well-definedness of the subproblems is described in detail.
    We also present an adaptive variant which does not require explicit knowledge of the smoothness constant and handles inexact subproblem solutions, while maintaining the same complexity.
\end{itemize}

\paragraph{Notation} 
By \(\langle \cdot, \cdot \rangle\) we denote the Euclidean inner product on \(\R^n\), and by \(\Vert \cdot \Vert\) the induced Euclidean norm on \(\R^n\) as well as the spectral norm for matrices.
For a square matrix \(A\) with real spectrum, the largest and smallest eigenvalues are denoted by \(\lambda_{\max}(A)\) and \(\lambda_{\min}(A)\), respectively.
Likewise, \(\sigma_{\max}(A)\) and \(\sigma_{\min}(A)\) refer to the largest and smallest singular values.
We use the notation \(\barsgn(x) = \nicefrac{x}{\Vert x \Vert}\) for \(x \in \R^n \setminus \{0\}\) and \(0\) otherwise.
The class of \(k\) times continuously differentiable functions is denoted by \(\cC^k\).
A function \(f \in \cC^1\) is \(L_{f,1}\)-Lipschitz smooth or has \(L_{f,1}\)-Lipschitz continuous gradients if \(\Vert \nabla f(x) - \nabla f(y) \Vert \leq L_{f, 1} \Vert x - y \Vert\) for all \(x, y \in \R^n\) with \(L_{f,1}\geq 0\).
A function \(f \in \cC^2\) has \(L_{f,2}\)-Lipschitz continuous Hessians if \(\Vert \nabla^2 f(x) - \nabla^2 f(y) \Vert \leq L_{f, 2} \Vert x - y \Vert\) for all \(x, y \in \R^n\) with \(L_{f,2}\geq 0\), and is \((L_0, L_1)\)-smooth if \(\Vert \nabla^2 f(x) \Vert \leq L_0 + L_1 \Vert \nabla f(x) \Vert\) for all \(x \in \R^n\) with \(L_0, L_1 \geq 0\).
Otherwise, we adopt the notation from \cite{rockafellar_variational_1998}.

\subsection{Related work}

\paragraph{Nonlinear Preconditioning} The nonlinearly preconditioned gradient method \eqref{eq:pg} was first analyzed in the convex setting by \cite{maddison_dual_2021}, and later in the nonconvex setting \cite{laude_anisotropic_2025,oikonomidis_nonlinearly_2025} under anisotropic smoothness, a notion introduced in \cite{laude_dualities_2023} and closely related to the concept of \(\Phi\)-convexity \cite{leger_gradient_2023,oikonomidis_forward-backward_2025}.
The method has also been extended to measure spaces \cite{bonet_mirror_2024}.
Recent works presented variants with momentum and a stochastic analysis \cite{oikonomidis2025nonlinearly}, and established saddle point avoidance of the scheme \cite{bodard2025escaping}. 

\paragraph{Generalized smoothness}

Gradient-based methods are traditionally analyzed under restrictive Lipschitz smoothness assumptions, see e.g.\,\cite{nesterov_lectures_2018}.
Bregman relative smoothness is a milder condition that allows for unbounded Hessians \cite{lu_relatively_2018}.
More recently, the \((L_0, L_1)\)-smoothness condition, proposed by \cite{zhang_why_2020} based on empirical observations in training LSTMs, has gained traction.
Further generalizations include \(\alpha\)-symmetric smoothness \cite{chen_generalized-smooth_2023} and \(\ell\)-smoothness \cite{li_convex_2023}.
Remark that anisotropic smoothness subsumes \((L_0, L_1)\)-smoothness and \(\ell\)-smoothness for certain reference functions \cite{bodard2025escaping}.
For Newton methods, Lipschitz continuity of the Hessian or self-concordance are often assumed \cite{nesterov1994interior,hanzely2022damped,doikov2025minimizing}.

\paragraph{Newton's method} Following the terminology of \cite{maddison_dual_2021}, the proposed method \eqref{eq:pn} is derived by `left'-preconditioning \(\nabla f\), cfr.\,\eqref{eq:preconditioned-gradient}. A distinct class of `right'-preconditioned Newton methods has been studied in \cite{burachik2012generalized,burachik2021generalized}.
Various globalization and regularization techniques have been explored for Newton methods.
Cubic regularization methods \cite{nesterov2006cubic} attain an \(\cO \left(\varepsilon^{-\nicefrac{3}{2}}\right)\) complexity under Lipschitz continuity of the Hessian. This is optimal for this class of problems \cite{carmon2020lower}.
Following the works of \cite{cartis2011adaptive,cartis2011adaptive2}, \emph{adaptive} variants that attain the optimal rate without knowledge of the actual Lipschitz constant are a popular subject of study \cite{dussault2024scalable}, and also \emph{universal} variants receive attention \cite{doikov2024super}.
Quadratic regularization or Levenberg-Marquardt regularization \cite{levenberg1944method,marquardt1963algorithm} has also been studied for its conceptual simplicity and computational efficiency \cite{li2004regularized,polyak2009regularized}.
Many recent works analyze its iteration complexity, both in the convex \cite{mishchenko2023regularized,doikov2024gradient} and nonconvex setting \cite{gratton2025yet,royer2020newton,he2023newton,zhu2024hybrid,zhou2025regularized}.

\subsection{Properties of the reference function} \label{sec:reference-function}

Throughout this work, we consider valid the following assumptions on the reference function \(\phi\).
\begin{assumption} \label{assumption:reference-basic}
    The following statements hold:
        \begin{assumenum}
            \item The function \(\phi : \R^n \to \exR\) is proper, lsc, \(\mu_\phi\)-strongly convex and even with \(\phi(0) = 0\);
            \item \(\intr \dom \phi \neq \emptyset\); \(\phi \in \cC^2(\intr \dom \phi)\); and for any sequence \(\{x^k\}_{k \in \N}\) that converges to some boundary point of \(\intr \dom \phi\), it follows that \(\Vert \nabla \phi(x^k) \Vert \to \infty\).
        \end{assumenum}
\end{assumption}
\Cref{assumption:reference-basic} ensures in particular that the equivalence \eqref{eq:preconditioned-gradient} holds (cfr.\,\cref{lem:stationary-point-i}), that \(\phi \geq 0\) (cfr.\,\cref{lem:stationary-point-ii}) and that \(\phi^* \in \cC^2\) \cite[p.\,42]{rockafellar1977higher}.
Henceforth, we focus on two types of reference functions:
\begin{enumerate*}[label=(\roman*)]
    \item \emph{isotropic} functions \(\phi(x) = h(\Vert x \Vert)\); and
    \item \emph{separable} functions \(\phi(x) = \sum_{i = 1}^n h(x_i)\)
\end{enumerate*}
where \(h : \R \to \R_+ \cup \{+\infty\}\) is a scalar kernel function.
Interesting choices include
\begin{align*}
    h_1(x) = \cosh(x) &- 1, \quad
    h_2(x) = \exp(\vert x \vert) - \vert x \vert - 1, \quad
    h_3(x) = - \vert x \vert - \ln(1 - \vert x \vert).
\end{align*}
Notably, for \(\phi(x) = \sum_{i = 1}^n h_3(x_i)\) the iterates of \eqref{eq:pg} reduce to a variant of Adam without momentum \cite[Example 1.6]{oikonomidis_nonlinearly_2025}.
We emphasize that \(h_1, h_2, h_3\) grow faster than \(\frac{1}{2} x^2\).
For such reference functions anisotropic smoothness describes more general, and hence more flexible, upper bounds than Lipschitz smoothness. 
Note that the definition in \cite{oikonomidis_nonlinearly_2025} involves two constants \(L, \bar L > 0\).
Without loss of generality \(\bar L\) can be absorbed into \(\phi\) by instead considering \(\widetilde \phi \eqdef \bar L \phi\), see also \cite{oikonomidis2025nonlinearly}.
\begin{definition}[Anisotropic smoothness] \label{def:anisotropic-smoothness}
    A function \(f : \R^n \to \R\) is \(L\)-anisotropically smooth relative to \(\phi\) with constant \(L > 0\) if, for all \(x, \bar x \in \R^n\) and with \(\bar y = \bar x - L^{-1} \nabla \phi^*(\nabla f(\bar x))\),
    \begin{equation} \label{eq:anisotropic-smoothness}
        f(x) \leq f(\bar x) + L^{-1} \phi(L (x - \bar y)) - L^{-1} \phi(L(\bar x - \bar y)).
    \end{equation}
\end{definition}
We now briefly describe the gradient and Hessian of \(\phi^*\) in the isotropic and separable settings.
More details, including the conjugates of \(h_1, h_2, h_3\) and other kernel functions, can be found in \cite{oikonomidis_nonlinearly_2025}. 
\paragraph{Isotropic reference functions} In the isotropic case \(\phi = h \circ \Vert \cdot \Vert\) we have that \(\phi^* = {h^*} \circ \Vert \cdot \Vert\), with
\(
    \nabla \phi^*(y) = {h^*}'(\Vert y \Vert) \barsgn(y)
\)
for all \(y \in \R^n\).
Moreover, the Hessian can be expressed by
\begin{equation*}
    \left[ \nabla^2 \phi^*(y) \right]^{-1} = \frac{1}{{h^*}''(\Vert y \Vert)} \frac{y y^\top}{\Vert y \Vert^2} + \frac{\Vert y \Vert}{{h^*}'(\Vert y \Vert)} \left( \id - \frac{y y^\top}{\Vert y \Vert^2} \right)
\end{equation*}
for \(y \in \R^n \setminus \{0\}\) and else \(\left[ \nabla^2 \phi^*(y) \right]^{-1} = \nicefrac{1}{{h^*}''(\Vert y \Vert)} \id\).

\paragraph{Separable reference functions} In the separable case we have \(\phi^*(y) = \sum_{i = 1}^n {h^*}(y_i) \).
The gradient is of the form \(\nabla \phi^*(y) = \left({h^*}'(y_1), \dots, {h^*}'(y_n)\right)\) and the Hessian is a diagonal matrix with elements
\begin{equation*}
    \left[ \left[ \nabla^2 \phi^*(y) \right]^{-1} \right]_{i,i} = \frac{1}{{h^*}''(y_i)}.
\end{equation*}

%% file: contents/preconditioned_newton.tex
Not any reference function \(\phi\) that satisfies \cref{assumption:reference-basic} is desirable to be used in the context of the preconditioned Newton method described by \eqref{eq:pn}.
We discuss two aspects.

\paragraph{Computational cost of the Newton system} First, the system \eqref{eq:pn} should not be more difficult to solve than the classical Newton system \eqref{eq:n}.
If \(\phi = h \circ \Vert \cdot \Vert\) is isotropic and \(\nabla f(x) \neq 0\), then by \cref{sec:reference-function} we have that
\(
    \nabla^2 f(x) d = - [\nabla^2 \phi^*(\nabla f(x))]^{-1} \nabla \phi^*(\nabla f(x)) = -\alpha(x) \nabla f(x)
\)
where we introduced the shorthand notation
\(
    \alpha(x) := \tfrac{{h^*}'(\Vert \nabla f(x)\Vert)}{{h^*}''(\Vert \nabla f(x)\Vert) \Vert \nabla f(x) \Vert}.
\)
Thus, for isotropic reference functions \eqref{eq:pn} reduces to a conventional Newton method with variable stepsizes \(\alpha(x)\).
\Cref{lem:alpha} provides explicit expressions for \(\alpha(x)\) corresponding to \(h_1, h_2, h_3\).
For these kernel functions \(\alpha(x) \geq 1\) for all \(x \in \R^n\), i.e., preconditioned Newton steps are \emph{larger} than vanilla Newton steps.

On the other hand, if \(\phi(x) = \sum_{i = 1}^n h(x_i)\) is separable, then
\begin{align*}
    \left[ \nabla^2 f(x) d \right]_i &= - \left[ [\nabla^2 \phi^*(\nabla f(x))]^{-1} \nabla \phi^*(\nabla f(x)) \right]_i = - \tfrac{{h^*}'(\nabla_i f(x))}{{h^*}''(\nabla_i f(x))}.
\end{align*}

In both cases, the dominant computational cost remains solving a system with the Hessian $\nabla^2 f$.

\paragraph{Growth of the scalar kernel function} Second, the kernel function \(h\) should grow \emph{sufficiently fast}.

\begin{example} \label{ex:ideal-reference}
    Suppose that \(f\) is strictly convex with minimizer \(x^\star \in \R^n\).
    Then the preconditioned Newton method described by \eqref{eq:pn} with reference function \(\phi(x) = f(x + x^\star)\) converges in a \emph{single iteration}.
    Indeed, we have \(\phi^*(y) = f^*(y) - \langle y, x^\star \rangle\) by \cite[Eq.\,11(3)]{rockafellar_variational_1998}, and hence \(\nabla \phi^*(\nabla f(x^k)) = x^k - x^\star\) and \(\nabla^2 \phi^*(\nabla f(x)) = \left[\nabla^2 f(x)\right]^{-1}\).
    It follows that \(x^{k+1} = x^\star \).
\end{example}
Of course the `ideal' reference function from \cref{ex:ideal-reference} is impractical, as it requires knowledge of the minimizer \(x^\star\).
However, it indicates that the growth of \(\phi\) around \(0\) should ideally \emph{mimic the growth of \(f\) around \(x^\star\)}.
In particular, if \(f\) is not Lipschitz smooth and grows faster than a quadratic, then so should \(\phi\).
This motivates scalar kernel functions like \(h(x) = \cosh(x) - 1 = \sum_{i = 1}^\infty \frac{x^{2i}}{(2i)!}\), which can match arbitrarily fast polynomial growth.

\subsection{Fast local convergence}

Local convergence of Newton's method is often analyzed under Lipschitz continuity of the Hessian \(\nabla^2 f\).
We assume instead Lipschitz continuity of the \emph{preconditioned Hessian}, as in \cite[Assump.\,3.3]{bodard2025escaping}.
\begin{assumption} \label{as:prec-hessian-lipschitz}
    The mapping \(H(x) := \nabla^2 \phi^*(\nabla f(x)) \nabla^2 f(x)\) is \(L_H\)-Lipschitz continuous, i.e.,
    \begin{equation*}
        \exists L_H \geq 0: \Vert H(x) - H(y) \Vert \leq L_H \Vert x - y \Vert, \quad \forall x, y \in \R^n.
    \end{equation*}
\end{assumption}
\Cref{as:prec-hessian-lipschitz} not only subsumes the usual assumption (\(\phi = \frac{1}{2} \Vert \cdot \Vert^2\)), but also holds for costs with stronger growth. 
A proof for the following example is given in \cref{sec:prf-preconditioned-hessian-example}.

\begin{example}[{\cite[Example 3.4]{bodard2025escaping}}] \label{ex:preconditioned-hessian-example}
    Let \(f(x) : \R \to \R\) be a 1D polynomial, and let \(\phi(x) = \cosh(\vert x \vert) - 1\).
    Then there exists an \(L_H > 0\) such that \cref{as:prec-hessian-lipschitz} holds relative to \(\phi\) with constant \(L_H\).
\end{example}
This example reveals that despite a higher-order growth of \(f\), there may exist a \emph{global} constant \(L_H\) for which \cref{as:prec-hessian-lipschitz} holds.
By contrast, a global Hessian Lipschitz constant \(L_{f,2}\) does not exist in the presence of such growth; at best there exists a local constant \(L_{f,2}\) which scales with the diameter of a compact set containing the iterates.
Since these constants determine both the local and global convergence of Newton-based methods, this motivates analyzing \eqref{eq:pn} under \cref{as:prec-hessian-lipschitz}.

If initialized sufficiently close to a strong local minimizer, the iterates of the preconditioned Newton method described by \eqref{eq:pn} converge at least superlinearly.
Note that for isotropic and separable reference functions the Hessian \(\nabla^2 \phi^*(y)\) is a multiple of identity at \(y = 0\).
\begin{theorem} \label{thm:pn-superlinear-quadratic-convergence}
    Let \(x^\star \in \R^n\) be a strong local minimum of \(f \in \cC^2\), i.e., \(\nabla f(x^\star) = 0\) and there exists a \(\mu_f > 0\) such that \(\nabla^2 f(x^\star) \succeq \mu_f I\). Then the following statements hold:
    \begin{theoremenum}
        \item \label{theoremitem:superlinear} there exists a \(\delta > 0\) such that for all \(x^0 \in \cB(x^\star; \delta)\), the iterates \({x^k}_{k \in \N}\) of \eqref{eq:pn} remain in \(\cB(x^\star; \delta)\) and the sequence of iterates converges Q-superlinearly to \(x^\star\);
        \item \label{theoremitem:quadratic} if, additionally, \cref{as:prec-hessian-lipschitz} holds with constant \(L_H \geq 0\), if \(\nabla^2 \phi^*(0) = \widetilde L^{-1} \id\) for some \(\widetilde L > 0\), and if \(\Vert x^0 - x^\star \Vert \leq \Delta \eqdef \frac{2\mu_f}{3\widetilde L L_H}\), then the iterates converge Q-quadratically, i.e.,
        \begin{equation*}
            \forall k \geq 0: \Vert x^{k+1} - x^\star \Vert \leq \frac{\widetilde L L_H}{2(\mu_f - \widetilde L L_H \Vert x^k - x^\star \Vert)} \Vert x^k - x^\star \Vert^2.
        \end{equation*}
    \end{theoremenum}
\end{theorem}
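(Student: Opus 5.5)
The plan is to treat \eqref{eq:pn} as Newton's root-finding method applied to the mapping \(F(x) := \nabla \phi^*(\nabla f(x))\). This mapping is \(\cC^1\), because \(\phi^* \in \cC^2\) and \(f \in \cC^2\). Its Jacobian is exactly \(H(x) = \nabla^2 \phi^*(\nabla f(x)) \nabla^2 f(x)\). Moreover, \(F(x^\star) = 0\) by the equivalence \eqref{eq:preconditioned-gradient}, i.e.\ \cref{lem:stationary-point-i}. The whole proof then amounts to checking the hypotheses of the classical local Newton theorem, and for part (ii) redoing its quadratic estimate with explicit constants.

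For \cref{theoremitem:superlinear}, the first step is to show that \(H(x^\star) = \nabla^2 \phi^*(0) \nabla^2 f(x^\star)\) is nonsingular. Since \(\phi\) is \(\mu_\phi\)-strongly convex, \(\nabla \phi^*\) is \(1/\mu_\phi\)-Lipschitz. We also need \(\nabla^2 \phi^*(0) \succ 0\). Here \(\nabla\phi^*\) is the inverse of \(\nabla\phi\) on \(\intr\dom\phi\), and \(\phi \in \cC^2\) there, so \(\nabla^2\phi^*(y) = [\nabla^2 \phi(\nabla\phi^*(y))]^{-1}\) is positive definite.

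By continuity of \(H\), there is a radius \(\delta\) on which \(H(x)\) is invertible with \(\|H(x)^{-1}\| \le M\). Write \(e^k = x^k - x^\star\). The standard identity
\[
e^{k+1} = H(x^k)^{-1}\int_0^1 \bigl(H(x^k) - H(x^\star + t e^k)\bigr) e^k \, dt
\]
gives \(\|e^{k+1}\| \le M\,\omega(\|e^k\|)\,\|e^k\|\), where \(\omega\) is the modulus of continuity of \(H\) on the ball. Shrinking \(\delta\) so that \(M\omega(\delta) \le 1/2\) yields invariance of \(\cB(x^\star;\delta)\) and convergence. Q-superlinearity then follows because \(\omega(\|e^k\|) \to 0\).

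For \cref{theoremitem:quadratic}, the goal is to bound \(\|H(x)^{-1}\|\) explicitly. We have \(H(x^\star) = \widetilde L^{-1}\nabla^2 f(x^\star)\), which is symmetric with \(\lambda_{\min} \ge \mu_f/\widetilde L\), so \(\sigma_{\min}(H(x^\star)) \ge \mu_f/\widetilde L\). By \cref{as:prec-hessian-lipschitz} and Weyl's inequality for singular values,
\[
\sigma_{\min}(H(x^k)) \ge \mu_f/\widetilde L - L_H\|e^k\|.
\]
This gives
\[
\|H(x^k)^{-1}\| \le \frac{\widetilde L}{\mu_f - \widetilde L L_H \|e^k\|}.
\]
The integral in the identity above is bounded by \(\int_0^1 L_H(1-t)\|e^k\|^2\,dt = \tfrac{L_H}{2}\|e^k\|^2\), which yields exactly the stated inequality. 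Induction then finishes the argument. If \(\|e^k\| \le \Delta = \tfrac{2\mu_f}{3\widetilde L L_H}\), then \(\mu_f - \widetilde L L_H\|e^k\| \ge \mu_f/3\), so the contraction factor satisfies
\[
\frac{\widetilde L L_H \|e^k\|}{2(\mu_f/3)} \le 1.
\]
This is not strictly below one, so it must be argued more carefully. Since the factor is an increasing function of \(\|e^k\|\), the factor at \(\Delta\) equals exactly \(1\), giving \(\|e^{k+1}\| \le \|e^k\|\). The iterates therefore stay in the ball, and the error bound then forces \(\|e^k\| \to 0\) quadratically once it is strictly below \(\Delta\). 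The boundary case \(\|e^0\| = \Delta\) needs this extra care.

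The main obstacle is the nonsymmetric structure of \(H\): it is a product of two symmetric matrices, so eigenvalue perturbation arguments are unavailable. This is why the plan uses singular values (Weyl, or the Banach lemma) together with the explicit assumption \(\nabla^2\phi^*(0) = \widetilde L^{-1}\id\), which makes \(H(x^\star)\) symmetric. A secondary subtlety is justifying positive definiteness of \(\nabla^2\phi^*(0)\) in part (i), for which we rely on the inverse-function relation above.
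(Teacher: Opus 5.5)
Your proposal follows the paper's proof essentially step for step. Both treat \eqref{eq:pn} as Newton's root-finding method on $g=\nabla\phi^*\circ\nabla f$. For (i), both use the integral-remainder identity for $x^{k+1}-x^\star$ with a locally bounded $H(x)^{-1}$ and a contraction factor $\tfrac12$. For (ii), both use a singular-value perturbation of the symmetric $H(x^\star)=\widetilde L^{-1}\nabla^2 f(x^\star)$ together with the residual bound $\tfrac{L_H}{2}\Vert x^k-x^\star\Vert^2$. Your induction keeping $\Vert x^k-x^\star\Vert\le\Delta$ for all $k$ actually supplies a step the paper's proof leaves implicit.

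Do not look for the ``extra care'' in the boundary case, because none exists. Take $n=1$ and $\phi=\tfrac12\vert\cdot\vert^2$, so $\widetilde L=1$ and $H=f''$, and let $f''(s)=\max\{\mu_f-L_H\vert s\vert,\,\mu_f/3\}$. Then the step from $x^0=\Delta$ gives $x^1=-\Delta$, and the iterates cycle between $\pm\Delta$ with equality in the displayed bound. So convergence genuinely requires $\Vert x^0-x^\star\Vert<\Delta$, as in Nesterov's original result; at $\Vert x^0-x^\star\Vert=\Delta$ only the displayed inequality holds, which your argument already establishes.
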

\Cref{thm:pn-superlinear-quadratic-convergence} generalizes \cite[1.2.5]{nesterov_lectures_2018} from the standard (\(\phi = \frac{1}{2} \Vert \cdot \Vert^2\)) to the preconditioned setting, and replaces the conventional Lipschitz constant \(L_{f,2}\) by \(L_H\).

\subsection{Globalization}

Similar to the classical Newton method \eqref{eq:n}, the preconditioned variant \eqref{eq:pn} may diverge if initialized too far from a stationary point.
A standard globalization strategy consists of performing an Armijo linesearch under a Lipschitz-smoothness assumption.
In our framework, the corresponding notion would be anisotropic smoothness (cfr.\,\cref{def:anisotropic-smoothness}).

An Armijo linesearch does not globalize \eqref{eq:pn}, because the preconditioned directions are \emph{no descent directions} in general, i.e, \(\langle \nabla f(x^k), d^k \rangle \leq 0\) may not hold.
Instead, \cref{alg:globalized-pn} defines the candidate point \(x(\tau) \eqdef x^k + \tau d^k + (1 - \tau) p^k\) as a convex combination of a Newton direction \(d^k\) and a negative gradient direction \(p^k\).
As \(\tau \to 0\) we have \(x(\tau) \to p^k\), which yields sufficient decrease under anisotropic smoothness.
A similar strategy is used for globalizing proximal gradient methods \cite{stella2017simple}.
\begin{algorithm}
    \caption{Globalized preconditioned Newton method}
    \label{alg:globalized-pn}
    \begin{algorithmic}[1]
        \REQUIRE{\(x^0 \in \R^n\), \(\alpha \in (0, 1), \sigma \in (0, 1)\)}
        \STATE{Define the stepsize \(\gamma = \nicefrac{\alpha}{L}\)}
        \FOR{$k=0, 1, \dots$}
            \STATE Let \(g^k = \nabla \phi^*(\nabla f(x^k))\); \(p^k = - \gamma g^k\); and \(H_k = \nabla^2 \phi^*(\nabla f(x^k)) \nabla^2 f(x^k)\) 
            \STATE Select \(d^k \in \R^n\) such that \(H_k d^k = - g^k\) if possible, 
            otherwise set \(d^k = -\gamma g^k\)
            \STATE \(x^{k+1} = x^k + \tau_k d^k + (1 - \tau_k) p^k\) where \(\tau_k \in \{\nicefrac{1}{2^i} \mid i \in \N\}\) is the largest stepsize for which \[
                f(x^{k+1}) - f(x^k) \leq -\gamma \sigma \phi(g^k).
            \]
        \ENDFOR
    \end{algorithmic}
\end{algorithm}

If $H^k$ does not have full rank, then there need not exist a $d^k$ satisfying \eqref{eq:pn}.
For simplicity, \cref{alg:globalized-pn} selects a negative gradient direction $d^k = -\gamma g^k$ in such cases. 
\begin{theorem}[Global convergence] \label{thm:pn-globalization}
    Consider the iterates generated by \cref{alg:globalized-pn} and suppose that \(f\) is \(L\)-anisotropically smooth relative to \(\phi\), for some \(L > 0\).
    Then, the linesearch procedure is well-defined, i.e., there exists \(\bar \tau_k > 0\) such that for all \(\tau \in [0, \bar \tau_k]\)
    \begin{equation*}
        f(x^k + \tau d^k + (1 - \tau) p^k) - f(x^k) \leq - \gamma \sigma \phi(\nabla \phi^*(\nabla f(x^k))). 
    \end{equation*}
    Moreover, we obtain a sublinear convergence rate:
    \begin{equation*}
        \min_{0 \leq k \leq K} \phi(\nabla \phi^*(\nabla f(x^k))) \leq \frac{L \left( f(x^0) - \inf f \right)}{\alpha \sigma (K+1)}.
    \end{equation*}
\end{theorem}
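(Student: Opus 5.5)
The plan is to use the anisotropic smoothness inequality \eqref{eq:anisotropic-smoothness} at \(\bar x = x^k\). By definition, \(\bar y^k = x^k - L^{-1} g^k\), so \(L(x^k - \bar y^k) = g^k\). Hence for every \(x\),
\[
f(x) \leq f(x^k) + L^{-1}\phi(L(x - \bar y^k)) - L^{-1}\phi(g^k).
\]

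The first step handles the pure preconditioned gradient point \(x = x^k + p^k = x^k - \tfrac{\alpha}{L} g^k\). Then \(L(x - \bar y^k) = (1-\alpha) g^k\). Since \(\phi\) is convex with \(\phi(0)=0\), we have \(\phi((1-\alpha)g^k) \leq (1-\alpha)\phi(g^k)\). This yields
\[
f(x^k+p^k) - f(x^k) \leq -\tfrac{\alpha}{L}\phi(g^k) = -\gamma \phi(g^k).
\]
Because \(\sigma<1\), this gives a strict margin \(-\gamma(1-\sigma)\phi(g^k)\) below the required level whenever \(\phi(g^k)>0\).

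The second step shows the linesearch is well defined. The map \(\tau \mapsto f(x^k + \tau d^k + (1-\tau)p^k)\) is continuous, since \(f\in\cC^2\). We split into cases.
\begin{itemize}
\item If \(\nabla f(x^k)=0\), then \(g^k=0\) by \cref{lem:stationary-point-i}. Then \(p^k=0\), and \(d^k=0\) as well: either \(d^k=-\gamma g^k\), or one may take the solution \(d^k=0\) of \(H_kd^k=0\). I will assume the algorithm stops or picks \(d^k=0\) here, so the inequality \(0\le 0\) holds for all \(\tau\).
\item Otherwise \(g^k\neq 0\), and \(\phi(g^k)>0\) by strong convexity and \(\phi\ge 0\) (\cref{lem:stationary-point-ii}). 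Continuity at \(\tau=0\), combined with the strict margin from the first step, gives some \(\bar\tau_k>0\) such that the decrease condition holds on \([0,\bar\tau_k]\).
\end{itemize}
In either case some dyadic \(\tau_k = 2^{-i}\le \bar\tau_k\) is accepted. The largest such \(\tau_k\) exists because the candidates are bounded above by \(1\) and the accepted set is nonempty.

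The third step is the rate, which follows by telescoping. Accepted steps give \(\gamma\sigma\phi(g^k) \leq f(x^k)-f(x^{k+1})\). Summing over \(k=0,\dots,K\) and using \(f(x^{K+1})\ge \inf f\) gives
\[
(K+1)\,\gamma\sigma\min_{k\le K}\phi(g^k) \le f(x^0)-\inf f.
\]
Substituting \(\gamma=\alpha/L\) produces the stated bound.

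The main obstacle is the subtlety in the second step. Continuity only gives decrease near \(\tau=0\), so the \(\sigma<1\) slack is what makes this work. The degenerate stationary case must also be treated so that \(d^k\) stays bounded; this is handled by the case \(\nabla f(x^k)=0\) above, and when \(d^k\) is finite everything else is straightforward.
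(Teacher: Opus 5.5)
Your proof is correct and matches the paper's. Both apply the anisotropic descent inequality at $x^k+p^k$ together with $\phi((1-\alpha)g^k)\le(1-\alpha)\phi(g^k)$ to get decrease $-\gamma\phi(g^k)$, use continuity of $\tau\mapsto f(x^k+\tau d^k+(1-\tau)p^k)$ at $\tau=0$ and $\sigma<1$ for the linesearch (the paper argues by contradiction letting $\varepsilon\to 0$, you argue directly), and telescope for the rate. Your separate treatment of $g^k=0$ is slightly more careful than the paper: its contradiction tacitly needs $\phi(g^k)>0$, and in that case a nonzero null-space choice of $d^k$ could make the claimed inequality fail.
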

Although \(\phi(\nabla \phi^*(\nabla f(x)))\) is a natural stationarity measure under anisotropic smoothness, we note that it can oftentimes be translated to the more standard measure \(\Vert \nabla f(x) \Vert\).
For example, if \(\phi(x) = \cosh(\Vert x \Vert) -1\), then \(\phi(\nabla \phi^*(\nabla f(x))) = \sqrt{1+\Vert \nabla f(x) \Vert^2} - 1\), as derived in \cite[Corollary 3.3]{oikonomidis2025nonlinearly}. 
We also mention that the fast local convergence rate of \eqref{eq:pn} is preserved.
\begin{theorem}[Fast local convergence] \label{thm:globalized-pn-local-convergence}
    Consider the iterates generated by \cref{alg:globalized-pn} and suppose that \(x^k \to x^\star\), with \(x^\star\) a strong local minimum of \(f\). 
    Then, eventually, \(\tau_k = 1\) is always accepted and the iterates satisfy \eqref{eq:pn}.
    In particular, all claims of \cref{thm:pn-superlinear-quadratic-convergence} hold.
\end{theorem}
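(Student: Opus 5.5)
We need to show that, once \(x^k\) is close enough to \(x^\star\), the full step \(\tau_k = 1\) passes the test \(f(x^k + d^k) - f(x^k) \leq -\gamma\sigma\phi(g^k)\), where \(d^k\) solves \eqref{eq:pn}. After that, the iteration is exactly \eqref{eq:pn} started inside the basin of \cref{thm:pn-superlinear-quadratic-convergence}.

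\textbf{Step 1: \(H_k\) is eventually invertible and \(d^k\) is small.} Since \(\nabla f(x^\star)=0\), \(\nabla^2\phi^*\) is continuous, and \(\nabla^2\phi^*(0)\) is positive definite (because \(\phi^*\in\cC^2\) is the conjugate of a strongly convex, essentially smooth function), the map \(H(x)\) is continuous. At \(x^\star\), \(H(x^\star)=\nabla^2\phi^*(0)\nabla^2 f(x^\star)\) is a product of two positive definite matrices, so it is invertible. Hence for \(k\) large \(H_k\) is invertible with \(\Vert H_k^{-1}\Vert\leq c\), so \(d^k=-H_k^{-1}g^k\) is well defined and \(\Vert d^k\Vert\leq c\Vert g^k\Vert\to 0\). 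By the superlinear estimate underlying \cref{theoremitem:superlinear}, which is a statement about a single Newton step on the root mapping \(\nabla\phi^*\circ\nabla f\), we get
\[
\Vert x^k+d^k-x^\star\Vert = o(\Vert x^k-x^\star\Vert).
\]

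\textbf{Step 2: the acceptance test via second-order expansions.} Write \(e_k=x^k-x^\star\). The Hessian is positive definite at \(x^\star\), so Taylor expansion gives
\[
f(x^k)-f(x^\star)=\tfrac12\langle\nabla^2 f(x^\star)e_k,e_k\rangle+o(\Vert e_k\Vert^2),
\]
and by Step 1, \(f(x^k+d^k)-f(x^\star)=o(\Vert e_k\Vert^2)\). Next, \(g^k=\nabla\phi^*(\nabla f(x^k))=\nabla^2\phi^*(0)\nabla^2 f(x^\star)e_k+o(\Vert e_k\Vert)\). Since \(\phi\) is \(\cC^2\) near \(0\) with \(\nabla\phi(0)=0\) (\(\phi\) is even and convex), we have \(\phi(g^k)=\tfrac12\langle\nabla^2\phi(0)g^k,g^k\rangle+o(\Vert g^k\Vert^2)\), and \(\nabla^2\phi(0)=[\nabla^2\phi^*(0)]^{-1}\). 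Set \(P=\nabla^2\phi^*(0)\) and \(A=\nabla^2 f(x^\star)\). Then
\[
\phi(g^k)=\tfrac12\langle PAe_k,Ae_k\rangle+o(\Vert e_k\Vert^2).
\]
The test therefore reduces to
\[
-\tfrac12\langle Ae_k,e_k\rangle+o(\Vert e_k\Vert^2)\leq-\tfrac{\gamma\sigma}{2}\langle PAe_k,Ae_k\rangle .
\]

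\textbf{Step 3: the key comparison, which is the main obstacle.} We need \(\gamma\sigma\langle PAe,Ae\rangle<\langle Ae,e\rangle\) uniformly for unit \(e\). I will use anisotropic smoothness at \(x^\star\). Since \(\nabla f(x^\star)=0\), we have \(\bar y=x^\star\), and \eqref{eq:anisotropic-smoothness} gives \(f(x)\leq f(x^\star)+L^{-1}\phi(L(x-x^\star))\). Expanding both sides to second order yields \(A\preceq L\,\nabla^2\phi(0)=LP^{-1}\). Conjugating by \(A^{1/2}\) gives \(A^{1/2}PA^{1/2}\preceq L\,\id\), and therefore
\[
\langle PAe,Ae\rangle\leq L\langle Ae,e\rangle .
\]
With \(\gamma=\alpha/L\), the right-hand side of the test is at least \(-\tfrac{\alpha\sigma}{2}\langle Ae,e\rangle\). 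Because \(\alpha\sigma<1\) and \(\langle Ae_k,e_k\rangle\geq\mu_f\Vert e_k\Vert^2\), the strict margin \((1-\alpha\sigma)\mu_f\Vert e_k\Vert^2/2\) absorbs the \(o(\Vert e_k\Vert^2)\) terms for large \(k\). So \(\tau_k=1\) is accepted, since the linesearch tries the largest value \(\tau=1\) first.

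\textbf{Step 4: conclusion.} Once \(\tau_k=1\), we have \(x^{k+1}=x^k+d^k\), so the iterates follow \eqref{eq:pn}. Because \(x^k\to x^\star\), they eventually enter the balls \(\cB(x^\star;\delta)\) and \(\cB(x^\star;\Delta)\), and \cref{thm:pn-superlinear-quadratic-convergence} then applies from that index on.
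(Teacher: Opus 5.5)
Your proof is correct, and it takes a different route from the paper for the key comparison between $f(x^k)-f(x^\star)$ and $\phi(g^k)$.

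The paper also starts from $\Vert x^k+d^k-x^\star\Vert = o(\Vert x^k-x^\star\Vert)$, which gives $\varepsilon_k = \frac{f(x^k+d^k)-f(x^\star)}{f(x^k)-f(x^\star)}\to 0$. It then gets the lower bound $f(x^k)-f(x^\star)\geq f(x^k)-f(x^k+p^k)\geq\gamma\phi(g^k)$ non-asymptotically. To do so it applies the anisotropic descent inequality at $x^k$ to the gradient point $x^k+p^k$, and uses $f(x^k+p^k)\geq f(x^\star)$ once $x^k+p^k$ is close to the strong local minimizer. Acceptance of $\tau_k=1$ then follows from $1-\varepsilon_k\geq\sigma$.

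You instead apply anisotropic smoothness at $\bar x=x^\star$ to extract the curvature bound $\nabla^2 f(x^\star)\preceq L\,\nabla^2\phi(0)$. You then compare second-order expansions of $f$ and of $\phi(g^k)$.

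The paper's route is shorter. It needs only a Taylor expansion of $f$: no expansion of $\phi$ at $0$ and no matrix inequalities. Yours costs a bit more: you need $0\in\intr\dom\phi$ (which follows from evenness and convexity of $\phi$) and a linear-algebra step. In exchange, it gives an explicit margin $(1-\alpha\sigma)\mu_f\Vert e_k\Vert^2/2$. It also isolates a reusable fact: at a stationary point, anisotropic smoothness forces $\nabla^2 f(x^\star)\preceq L\,\nabla^2\phi(0)$.

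One small step to tighten: conjugating $A\preceq LP^{-1}$ by $A^{1/2}$ does not directly give $A^{1/2}PA^{1/2}\preceq L\,\id$. Either use order reversal of inverses for positive definite matrices to get $P\preceq LA^{-1}$ and then conjugate by $A^{1/2}$. Or conjugate by $P^{1/2}$ to get $P^{1/2}AP^{1/2}\preceq L\,\id$, and note that this matrix has the same spectrum as $A^{1/2}PA^{1/2}$.
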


\begin{remark}[Adaptive estimation of \(L\)] \label{remark:adaptive-L}
    \Cref{alg:globalized-pn} implicitly assumes knowledge of  the constant \(L\) of anisotropic smoothness.
    Yet, this constant can also be estimated \emph{adaptively}, since the convergence proof only requires that 
    \begin{equation*}
        f(x^k - \gamma \nabla \phi^*(\nabla f(x^k))) - f(x^k) \leq -\gamma \phi(\nabla \phi^*(\nabla f(x^k)))
    \end{equation*}
    holds at any iteration.
    Whenever violated, we can set \(L \leftarrow 2 L\) and \(\gamma \leftarrow \nicefrac{\alpha}{L}\), and restart that iteration.
    It is easily verified that this can only happen a finite number of times.
\end{remark}

%% file: contents/cubic-regularization.tex
Practical Newton variants typically include some kind of \emph{regularization} to deal with singular Hessians.
By adding an appropriate positive multiple \(\lambda_k \geq 0\) of the identity matrix, the regularized Hessian \(\nabla^2 f(x^k) + \lambda_k \id\) becomes positive definite.
Regularized Newton variants thus use directions \(s^k\) satisfying
\(
    \left( \nabla^2 f(x^k) + \lambda_k \id \right) s^k = - \nabla f(x^k).
\)
\emph{Cubic regularization} methods select \(\lambda_k = \sigma \Vert s^k \Vert\) for some \(\sigma > 0\).
This direction \(s^k\) can be interpreted as the minimizer of a cubic model of \(f\), hence the name of this type of regularization.
\Cref{alg:pn-cubic-regularization} generalizes this variant to our preconditioning framework and uses directions \(s^k\) satisfying
\begin{equation} \label{eq:cubic-key-relation-2}
    \left( \nabla^2 f(x^k) + \lambda_k M_k \right) s^k = - M_k g^k
\end{equation}
where \(\lambda_k = \sigma \Vert s^k \Vert\) is such that the matrix \(\nabla^2 f(x^k) + \lambda_k M_k\) is positive semidefinite, with
\(
    g^k := \nabla \phi^*(\nabla f(x^k)),
\)
and
\(
    M_k := [\nabla^2 \phi^*(\nabla f(x^k))]^{-1} = \nabla^2 \phi(g^k).
\)
By left-multiplying with \(M_k^{-1}\) \eqref{eq:cubic-key-relation-2} is equivalent to
\begin{equation} \label{eq:cubic-key-relation}
    H(x^k) s^k + \lambda_k s^k = - g(x^k).
\end{equation}
Note that \cref{alg:pn-cubic-regularization} requires $\sigma \geq L_H$ and consequently requires knowledge of the constant $L_H$ from \cref{as:prec-hessian-lipschitz}.
Selecting $\sigma < L_H$ may result in the method not converging.
A variant that adaptively estimates $\sigma$ is presented in \cref{sec:cubic-adaptive}. 
\begin{algorithm}[ht]
    \caption{Preconditioned Newton method with regularization}
    \label{alg:pn-cubic-regularization}
    \begin{algorithmic}[1]
        \REQUIRE{\(x^0 \in \R^n\), \(\sigma \geq L_H\)}
        \FOR{$k=0, 1, \dots$}
            \STATE Let \(g^k = \nabla \phi^*(\nabla f(x^k))\); \(M_k = \nabla^2 \phi(\nabla \phi^*(\nabla f(x^k))) \)
            \STATE Compute a pair \((s^k, \lambda_k) \in \R^n \times \R\) satisfying
            \begin{align}\label{eq:cubic-subproblem}
                M_k^{-1} \left( \nabla^2 f(x^k) + \lambda_k M_k \right) s^k = - g^k, \quad
                \lambda_k = \sigma \Vert s^k \Vert, \quad
                \nabla^2 f(x^k) + \lambda_k M_k \succeq 0
            \end{align}
            \STATE Set \(x^{k+1} = x^k + s^k\).
        \ENDFOR
    \end{algorithmic}
\end{algorithm}

\begin{remark}[No cubic model]
    Although \cref{alg:pn-cubic-regularization} generalizes Newton's method with cubic regularization, it cannot, in general, be interpreted as minimizing a cubic model at each iteration.
    Indeed, define \(F_k(s^k) \eqdef (\nabla^2 f(x^k) + \sigma \Vert s^k \Vert M_k ) s^k\).
    If this is to be the gradient of a (cubic) model, then its Jacobian should be symmetric.
    However, \(J_{F_k}(s^k) =  \nabla^2 f(x^k) + \sigma \Vert s^k \Vert M_k + \sigma \frac{M_k s^k {s^k}^\top}{\Vert s^k \Vert}\) is asymmetric in general.
    A cubic model would exist if \(\lambda_k = \sigma \Vert s^k \Vert_{M_k}\), but then the subsequent convergence proof breaks down as
    \cref{lem:cubic-lem-1,lem:cubic-lem-2} would be expressed in different norms.
\end{remark}

\subsection{Well-definedness of the subproblem}
The following result confirms the well-definedness of the subproblem \eqref{eq:cubic-subproblem}, in the sense that a suitable pair \((s^k, \lambda_k)\) can always be found.
The proof is constructive, and follows a variation on standard arguments for cubic regularization methods \cite[\S 8]{cartis2022evaluation}.
\begin{theorem} \label{thm:cubic-subproblem-well-definedness}
    Let \(A \in \R^{n \times n}\) be symmetric, \(M \in \R^{n \times n}\) symmetric positive definite, \(g \in \R^n \setminus \{0\}\), and \(\sigma > 0\) strictly positive.
    Denote by \(\xi_1 \leq \dots \leq \xi_n\) the generalized eigenvalues of the pencil \((A, M)\), 
    and define
    \(
        s(\lambda) \eqdef -(A + \lambda M)^\dagger M g.
    \)
    Then, there exists a pair \((s^\star, \lambda_\star) \in \R^n \times \R\) satisfying
    \begin{subequations}
        \begin{align}
            (A + \lambda_\star M) s^\star = - M g, \label{eq:cubic-subproblem-eq1}\\ 
            \lambda_\star = \sigma \Vert s^\star \Vert, \label{eq:cubic-subproblem-eq2}\\ 
            A + \lambda_\star M \succeq 0. \label{eq:cubic-subproblem-eq3}
        \end{align}
    \end{subequations}
    In particular, exactly one of the following statements holds:
    \begin{theoremenum}
        \item \(\lambda_\star > -\xi_1\) and \(s^\star = s(\lambda_\star)\); \label{th:subproblem-general-case}
        \item \(\lambda_\star = - \xi_1\), and \(s^\star = s(\lambda_\star) + \alpha_\star v_1\), with \(\alpha_\star \in \R\) such that \(\Vert s^\star \Vert = \nicefrac{\lambda_\star}{\sigma}\), and $v_1$ an eigenvector corresponding to $\xi_1$. \label{th:subproblem-hard-case}
    \end{theoremenum}
\end{theorem}

\subsection{Convergence analysis in the isotropic setting} \label{sec:cubic-pn-convergence}

\begin{algorithm}[ht]
    \caption{Preconditioned Newton method with adaptive regularization}
    \label{alg:pn-adaptive-cubic-regularization}
    \begin{algorithmic}[1]
        \REQUIRE{
            \(x^0 \in \R^n\), \(\sigma_0 > 0\). Provide parameters \(\sigma_{\min} \in (0, \sigma_0], \theta \geq 0\) and \(\eta_1, \eta_2, \eta_3, \gamma_1, \gamma_2, \gamma_3\) satisfying
            \(
                0 < \eta_1 \leq \eta_2 < 1, 0 < \gamma_1 < 1 < \gamma_2 < \gamma_3.
            \)
        }
        \FOR{$k=0, 1, \dots$}
            \STATE Let \(g^k = \nabla \phi^*(\nabla f(x^k))\); \(M_k = \nabla^2 \phi(\nabla \phi^*(\nabla f(x^k))) \)
            \STATE Compute a triplet \((s^k, \lambda_k, z^k) \in \R^n \times \R \times \R^n\) satisfying
            \begin{equation}
                \left\{
                \begin{aligned}
                M_k^{-1} \left( \nabla^2 f(x^k) + \lambda_k M_k \right) s^k &= - g^k + z^k,\\
                \lambda_k &= \sigma_k \Vert s^k \Vert,\\
                \nabla^2 f(x^k) + \lambda_k M_k &\succeq 0,\\
                \Vert z^k \Vert &\leq \frac{1}{2} \theta \Vert s^k \Vert^2.
                \end{aligned}
                \right.
                \end{equation}
            \STATE Compute \(f(x^k + s^k)\) and define
            \(
                \rho_k = \frac{f(x^k) - f(x^k+s^k)}{(\nicefrac{\widetilde L}{4}) \sigma_k \Vert s^k \Vert^3}.
            \)
            \STATE{ \textbf{If } \(\rho_k \geq \eta_1\) \textbf{then}} \(x^{k+1} = x^k + s^k\) \textbf{else} \(x^{k+1} = x^k\)
            \STATE Set
            \begin{equation} \label{eq:sigma-update}
                \sigma_{k+1} \in \begin{cases}
                    [\max(\sigma_{\min}, \gamma_1 \sigma_k), \sigma_k] & \rho_k \geq \eta_2,\\
                    [\sigma_k, \gamma_2 \sigma_k] & \rho_k \in [\eta_1, \eta_2),\\
                    [\gamma_2 \sigma_k, \gamma_3 \sigma_k] & \rho_k < \eta_1.
                \end{cases}
            \end{equation}
        \ENDFOR
    \end{algorithmic}
\end{algorithm}

We now analyze the convergence of \cref{alg:pn-cubic-regularization} for \emph{isotropic} reference functions.
\begin{assumption}\label{ass:isotropic-reference}
    The reference \(\phi(x) = h(\Vert x \Vert)\) is isotropic, and 
    the function \(\frac{y}{{h^*}'(y)}\) is continuous and uniformly lower bounded by \(\widetilde L > 0\) on \(\R_+\).
\end{assumption}
Note that \(z \mapsto \frac{\Vert z \Vert}{{h^*}'(\Vert z \Vert)} \geq 1\) for \(h_1, h_2, h_3\), with equality holding only at \(z = 0\).
Under \cref{ass:isotropic-reference}, we therefore have that \(\nabla f(x^k) = \nu(x^k) \nabla \phi^*(\nabla f(x^k))\) where
\(
    \nu(x) := \frac{\Vert \nabla f(x) \Vert}{{h^*}'(\Vert \nabla f(x)\Vert)}
\)
is continuous and uniformly lower bounded, i.e., \(\nu(x) \geq \widetilde L\).
Unlike for generic preconditioners, \cref{ass:isotropic-reference} ensures that \(s^k\) is a descent direction.
Indeed, observe from \eqref{eq:cubic-key-relation-2} that
\(
    \left( \nabla^2 f(x^k) + \lambda_k M_k \right) s^k = - M_k g(x^k) = -\alpha(x^k) \nabla f(x^k).
\)
By positive semidefiniteness of \(\nabla^2 f(x^k) + \lambda_k M_k\), the update directions \(s^k\) therefore satisfy
\begin{equation} \label{eq:s-descent}
    \langle \nabla f(x^k), s^k \rangle = -\tfrac{1}{\alpha(x^k)} \langle s^k, \left( \nabla^2 f(x^k) + \lambda_k M_k \right) s^k \rangle \leq 0.
\end{equation}
Traditional analyses based on $L_{f,2}$-Lipschitz continuous Hessians exploit upper bounds of the form
\begin{equation*}
    \forall x, s \in \bR^n: \left| f(x+s) - f(x) - \nabla f(x)^T s - \frac{1}{2} s^T \nabla^2 f(x)s \right| \leq \frac{L}{6}\|s\|^3.
\end{equation*}
A similar result is nontrivial to obtain, and in fact does not hold under \cref{as:prec-hessian-lipschitz} for generic reference functions \(\phi\).
Nevertheless, the following key lemma establishes sufficient decrease by exploiting properties of the directions $s^k$ generated by \cref{alg:pn-cubic-regularization}.
\begin{lemma} \label{lem:cubic-lem-1}
    Consider \(x^k, s^k \in \R^n\) generated by \cref{alg:pn-cubic-regularization}.
    If \cref{as:prec-hessian-lipschitz,ass:isotropic-reference} hold, then
    \begin{equation*}
        f(x^k+s^k) - f(x^k) \leq - \frac{\widetilde L L_H}{3} \Vert s^k \Vert^3.
    \end{equation*}
\end{lemma}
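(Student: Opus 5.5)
The plan is to integrate $\nabla f$ along the segment $x^k + t s^k$, $t \in [0,1]$. Throughout, write $g(x) \eqdef \nabla \phi^*(\nabla f(x))$, and set $x = x^k$, $s = s^k$. Under \cref{ass:isotropic-reference} we have $\nabla f(y) = \nu(y)\, g(y)$ for every $y$, with $\nu(y) \geq \widetilde L$. This identity also holds trivially when $\nabla f(y) = 0$, since then $g(y) = 0$. Hence
\begin{equation*}
    f(x+s) - f(x) = \int_0^1 \nu(x+ts)\, \langle g(x+ts), s \rangle \, dt .
\end{equation*}
The strategy is to show that $\langle g(x+ts), s\rangle$ is nonpositive for all $t \in [0,1]$ and bounded above by an explicit cubic quantity. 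Nonpositivity lets us replace $\nu(x+ts)$ by its lower bound $\widetilde L$ in the upper estimate, because $\nu q \leq \widetilde L q$ whenever $q \leq 0$.

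First, note that $H(x) = \nabla^2\phi^*(\nabla f(x))\nabla^2 f(x)$ is exactly the Jacobian of the map $x \mapsto g(x)$. \Cref{as:prec-hessian-lipschitz} therefore gives the standard Taylor estimate
\begin{equation*}
    g(x+ts) = g(x) + t H(x) s + r(t), \qquad \Vert r(t) \Vert \leq \tfrac{L_H}{2} t^2 \Vert s \Vert^2 .
\end{equation*}
Next, substitute the key relation \eqref{eq:cubic-key-relation}, $H(x)s = -g(x) - \lambda_k s$ with $\lambda_k = \sigma\Vert s\Vert$. This yields $g(x+ts) = (1-t) g(x) - t\sigma\Vert s\Vert s + r(t)$, and taking the inner product with $s$ gives
\begin{equation*}
    \langle g(x+ts), s\rangle \leq (1-t)\langle g(x), s\rangle - \Big(t\sigma - \tfrac{L_H}{2}t^2\Big)\Vert s\Vert^3 .
\end{equation*}

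The first term is nonpositive. Since $g(x) = \nabla f(x)/\nu(x)$ is a positive multiple of $\nabla f(x)$, this follows from the descent property \eqref{eq:s-descent}, which in turn rests on the positive semidefiniteness $\nabla^2 f(x^k)+\lambda_k M_k \succeq 0$. For the second term, using $\sigma \geq L_H$ gives $t\sigma - \tfrac{L_H}{2}t^2 \geq L_H(t - t^2/2) \geq 0$ on $[0,1]$. Hence $\langle g(x+ts), s\rangle \leq -L_H (t - t^2/2)\Vert s\Vert^3 \leq 0$.

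Multiplying by $\nu(x+ts) \geq \widetilde L$ preserves this inequality with $\widetilde L$ in place of $\nu$, because the quantity is nonpositive. Integrating and using $\int_0^1 (t - t^2/2)\,dt = \tfrac13$ then gives the claimed bound $-\tfrac{\widetilde L L_H}{3}\Vert s\Vert^3$.

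The main subtlety is the sign argument. The estimate cannot simply bound $\nu$ from above, since $\nu$ may be unbounded. Instead we must check that $\langle g(x+ts),s\rangle \le 0$ along the entire segment, so that the lower bound $\nu \geq \widetilde L$ acts in the right direction. This is exactly where isotropy enters, via $\nabla f = \nu g$ with a scalar $\nu$, together with the descent property of $s^k$.
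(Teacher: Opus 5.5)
Your proof is correct and follows essentially the same route as the paper: the representation $f(x^k+s^k)-f(x^k)=\int_0^1 \nu(x^k+ts^k)\langle g(x^k+ts^k),s^k\rangle\,\mathrm dt$, the Taylor expansion of $g$ combined with the key relation, the descent property to drop the $(1-t)\langle g(x^k),s^k\rangle$ term, and then $\sigma\geq L_H$ and $\nu\geq\widetilde L$ before integrating. The paper uses the nonpositivity of the integrand only implicitly when it replaces $\nu$ by its lower bound $\widetilde L$; your explicit remark that this sign is what justifies the replacement makes that step precise.
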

Our analysis also requires a bound on \(\Vert s^k \Vert\) in terms of the (preconditioned) gradient norm.
\begin{lemma} \label{lem:cubic-lem-2}
    Consider \(x^k, s^k \in \R^n\) generated by \cref{alg:pn-cubic-regularization}.
    If \cref{as:prec-hessian-lipschitz} holds, then
    \begin{equation*}
        \Vert s^k \Vert^2 \geq \frac{2}{2\sigma + L_H} \Vert g(x^{k+1}) \Vert.
    \end{equation*}
\end{lemma}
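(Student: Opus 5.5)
We need to prove $\|s^k\|^2 \ge \frac{2}{2\sigma+L_H}\|g(x^{k+1})\|$, where $g(x)=\nabla\phi^*(\nabla f(x))$ and $x^{k+1}=x^k+s^k$.

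The plan is the standard cubic-regularization argument, carried out on the preconditioned gradient map $g$ instead of $\nabla f$. The key observation is that $g$ is $\cC^1$ with Jacobian
\[
Dg(x)=\nabla^2\phi^*(\nabla f(x))\,\nabla^2 f(x)=H(x),
\]
by the chain rule; this uses $\phi^*\in\cC^2$ and $f\in\cC^2$. Hence \cref{as:prec-hessian-lipschitz} is exactly Lipschitz continuity of the Jacobian of $g$. It therefore yields the usual first-order Taylor remainder bound
\[
\|g(x+s)-g(x)-H(x)s\|\le \tfrac{L_H}{2}\|s\|^2 .
\]
This follows from writing $g(x+s)-g(x)-H(x)s=\int_0^1 (H(x+ts)-H(x))s\,dt$ and bounding the integrand by $L_H t\|s\|^2$.

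Next, I will use the subproblem relation \eqref{eq:cubic-key-relation}, namely $H(x^k)s^k+\lambda_k s^k=-g(x^k)$ with $\lambda_k=\sigma\|s^k\|$. Adding and subtracting gives
\[
g(x^{k+1}) = \big[g(x^k+s^k)-g(x^k)-H(x^k)s^k\big] - \lambda_k s^k .
\]
The triangle inequality then gives
\[
\|g(x^{k+1})\|\le \tfrac{L_H}{2}\|s^k\|^2+\sigma\|s^k\|^2=\tfrac{2\sigma+L_H}{2}\|s^k\|^2 ,
\]
which rearranges to the claim.

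There is no real obstacle here. The only point needing care is justifying that $H$ is genuinely the derivative of $g$, so that the integral form of the remainder applies. This relies on $\phi^*\in\cC^2$ from \cref{assumption:reference-basic}, as noted after that assumption. No isotropy assumption is needed, consistent with the lemma requiring only \cref{as:prec-hessian-lipschitz}. Also note that \eqref{eq:cubic-key-relation} follows from \eqref{eq:cubic-subproblem} by invertibility of $M_k=\nabla^2\phi(g^k)$, which holds by strong convexity of $\phi$.
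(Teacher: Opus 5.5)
Your proposal is correct and follows the paper's proof: you expand $g(x^k+s^k)$ around $x^k$ with Jacobian $H$, use the key relation $H(x^k)s^k+\lambda_k s^k=-g(x^k)$ to get $g(x^{k+1})=-\sigma\Vert s^k\Vert s^k+r(s^k)$, and finish with the triangle inequality and the remainder bound $\Vert r(s^k)\Vert\leq \tfrac{L_H}{2}\Vert s^k\Vert^2$. Your remarks that $H$ really is the derivative of $g$ (because $\phi^*\in\cC^2$) and that $M_k$ is invertible spell out details the paper leaves implicit.
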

The combination of \cref{lem:cubic-lem-1,lem:cubic-lem-2} and a telescoping argument then yields the following complexity result.
\begin{theorem} \label{thm:cubic-pn-convergence-rate}
    Consider \(x^k, s^k \in \R^n\) generated by \cref{alg:pn-cubic-regularization}.
    If \cref{as:prec-hessian-lipschitz,ass:isotropic-reference} hold and if \(\sigma \geq L_H\), then \cref{alg:pn-cubic-regularization} finds a point such that \(\Vert \nabla \phi^*(\nabla f(x^K)) \Vert \leq \varepsilon\) in at most $K$ iterations, where
    \begin{equation}\label{eq:cubic-pn-convergence-rate}
        K  \leq \frac{6 \sigma^{\nicefrac{3}{2}} (f(x^0) - \inf f)}{\widetilde L L_H \varepsilon^{\nicefrac{3}{2}}}.
    \end{equation}
\end{theorem}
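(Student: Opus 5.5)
The argument combines \cref{lem:cubic-lem-1,lem:cubic-lem-2} and telescopes, as in the standard analysis of cubic regularization. Fix \(K\) and suppose, for contradiction, that \(\Vert g(x^{k}) \Vert > \varepsilon\) for all \(k = 1, \dots, K\).

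First, apply \cref{lem:cubic-lem-2} with \(\sigma \geq L_H\). This gives \(2\sigma + L_H \leq 3\sigma\), hence
\[
    \Vert s^k \Vert^2 \geq \frac{2}{3\sigma} \Vert g(x^{k+1}) \Vert > \frac{2\varepsilon}{3\sigma}
\]
for every \(k = 0, \dots, K-1\). Consequently
\[
    \Vert s^k \Vert^3 > \Bigl(\frac{2\varepsilon}{3\sigma}\Bigr)^{3/2}.
\]

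Second, apply \cref{lem:cubic-lem-1} at each of these iterations and sum from \(k=0\) to \(K-1\). Telescoping and \(f(x^K) \geq \inf f\) give
\[
    f(x^0) - \inf f \geq \sum_{k=0}^{K-1} \bigl(f(x^k) - f(x^{k+1})\bigr) \geq \frac{\widetilde L L_H}{3} \sum_{k=0}^{K-1} \Vert s^k \Vert^3 > K\,\frac{\widetilde L L_H}{3} \Bigl(\frac{2}{3}\Bigr)^{3/2} \frac{\varepsilon^{3/2}}{\sigma^{3/2}}.
\]
Rearranging yields
\[
    K < \frac{3 (3/2)^{3/2} \sigma^{3/2} (f(x^0) - \inf f)}{\widetilde L L_H \varepsilon^{3/2}}.
\]
The constant satisfies \(3 (3/2)^{3/2} \approx 5.51 \leq 6\), so this gives \eqref{eq:cubic-pn-convergence-rate} up to the off-by-one indexing discussed next.

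The main obstacle is bookkeeping rather than analysis. \Cref{lem:cubic-lem-2} bounds \(\Vert s^k\Vert\) by the gradient at the \emph{next} iterate \(x^{k+1}\). So the natural contradiction hypothesis is on \(x^1, \dots, x^{K}\), and the conclusion is that some \(x^j\) with \(j \leq K\) is \(\varepsilon\)-stationary. If \(x^0\) itself already satisfies \(\Vert g(x^0)\Vert \leq \varepsilon\), the claim is trivial. The slack between \(5.51\) and \(6\) absorbs rounding the iteration count up to an integer.

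Two side conditions need care.

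\emph{Degenerate steps.} If some \(s^k = 0\), then \eqref{eq:cubic-key-relation} gives \(g(x^k) = 0\), so the method has already reached a stationary point.

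\emph{Applicability of the lemmas.} Both lemmas must hold at every iteration for the chosen \(\sigma \geq L_H\). This is ensured by the well-definedness of the subproblem in \cref{thm:cubic-subproblem-well-definedness}.
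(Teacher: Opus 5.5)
Your main argument is the paper's: \cref{lem:cubic-lem-2} with \(2\sigma + L_H \leq 3\sigma\), then \cref{lem:cubic-lem-1}, then telescoping against \(f(x^0) - \inf f\). The only difference in the core is that the paper uses \((2/3)^{3/2} \geq 1/2\) to reach the constant \(6\) directly, while you keep \(3(3/2)^{3/2} \approx 5.51\). Your remarks on \(s^k = 0\) and on well-definedness of the subproblem are fine.

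The step that fails is your claim that the gap between \(5.51\) and \(6\) absorbs the off-by-one. Let \(B\) be the right-hand side of \eqref{eq:cubic-pn-convergence-rate}, and let \(K^\star\) be the first index with \(\Vert g(x^{K^\star}) \Vert \leq \varepsilon\). Applying your contradiction argument to \(x^1, \dots, x^{K^\star - 1}\) gives \(K^\star - 1 < 0.92\,B\), hence \(K^\star \leq \lceil 0.92\,B \rceil\). This is guaranteed to be at most \(B\) only once \(B\) exceeds about \(12\); a multiplicative slack cannot pay for an additive \(+1\) when \(B\) is small.

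In fact no argument can close this, because the literal statement is off by one. Take \(n = 1\), \(\phi = \tfrac12 \vert \cdot \vert^2\) (so \(\widetilde L = 1\)), and \(f(x) = \tfrac12 x^2\). Then \(H \equiv 1\), so any \(L_H = \delta > 0\) is admissible; choose \(\sigma = \delta\), \(x^0 = 1\) and \(\varepsilon = \tfrac12\). This gives \(B = 3\sqrt{\delta}\,\varepsilon^{-3/2} < 1\) for small \(\delta\), yet \(\Vert g(x^0) \Vert = 1 > \varepsilon\). The first \(\varepsilon\)-stationary iterate is \(x^1 \approx \delta\).

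The paper's own proof has the same off-by-one: it assumes \(x^1, \dots, x^K\) all fail the test and concludes \(K \leq B\). So what both arguments actually prove is \(K^\star \leq \lfloor B \rfloor + 1\). State the conclusion in that form, or add \(1\) to the bound, instead of appealing to the constant slack.
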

If \(\sigma = L_H\), then \eqref{eq:cubic-pn-convergence-rate} simplifies to
\(
    K \leq \frac{6 \sqrt{L_H} (f(x^0) - \inf f)}{\widetilde L \varepsilon^{\nicefrac{3}{2}}}.
\)
Observe that the preconditioned gradient norm \(\Vert \nabla \phi^*(\nabla f(x^K)) \Vert\) is used as optimality measure in \cref{thm:cubic-pn-convergence-rate}, instead of the standard gradient norm \(\Vert \nabla f(x^K) \Vert\).
Oftentimes, both can be related. 
For example, suppose that \(\phi(x) = \widetilde L h_3(\Vert x \Vert)\).
Then \(
    \nabla \phi^*(y) = \frac{y}{\widetilde L + \Vert y \Vert}.
\)
Since \(\Vert \nabla \phi^*(\nabla f(x^K)) \Vert \geq \frac{\Vert \nabla f(x^K) \Vert}{\widetilde L + G} \) where \(G = \max_{0\leq k \leq K} \Vert \nabla f(x^k) \Vert\).
In terms of the usual gradient norm, this yields a complexity $\cO(\sqrt{\widetilde L L_H} \varepsilon^{-\nicefrac{3}{2}})$, similar to \cite{zhou2025regularized}.

\subsection{Adaptive regularization} \label{sec:cubic-adaptive}

Finally, \cref{alg:pn-adaptive-cubic-regularization} describes an \emph{adaptive} variant of \cref{alg:pn-cubic-regularization}.
An obvious advantage is that the Lipschitz constant \(L_H\) is not explicitly required. 
Moreover, it supports inexact solutions to the subproblems, represented by the vector \(z^k \in \R^n\). Clearly, if \(z^k = 0\) (e.g.\,when setting \(\theta = 0\)), then the obtained subproblem solution is exact and corresponds to that of \cref{alg:pn-cubic-regularization}.
Since \(z^k = 0\) can always be chosen, well-definedness immediately follows.

\begin{theorem} \label{thm:adaptive-cubic-pn-convergence-rate}
    If \cref{as:prec-hessian-lipschitz,ass:isotropic-reference} hold, then \cref{alg:pn-adaptive-cubic-regularization} requires at most
    \[
        K \leq C_1 \varepsilon^{-\nicefrac{3}{2}} + C_2
    \]
    iterations, with \(C_1, C_2 > 0\) independent of \(\varepsilon\), to find a point such that \(\Vert \nabla \phi^*(\nabla f(x^K)) \Vert \leq \varepsilon\).
\end{theorem}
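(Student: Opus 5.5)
The plan is to follow the standard ARC complexity analysis of Cartis, Gould and Toint. The exact-case \cref{lem:cubic-lem-1,lem:cubic-lem-2} are first redone in inexact form, after which successful and unsuccessful iterations are counted separately.

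\textbf{Step 1: inexact sufficient decrease.} With \(z^k\) present, the relation \eqref{eq:cubic-key-relation} becomes \(H(x^k)s^k + \lambda_k s^k = -g^k + z^k\). I will redo the argument of \cref{lem:cubic-lem-1} with this relation and aim for a bound of the form
\[
f(x^k+s^k)-f(x^k)\le -\tfrac{\widetilde L}{3}\sigma_k\Vert s^k\Vert^3+ c\,(L_H+\theta)\Vert s^k\Vert^3 .
\]
Here \(c\) is an absolute constant, and the correction term is controlled via \(\nu(x)\ge\widetilde L\) together with \(\Vert z^k\Vert\le\tfrac12\theta\Vert s^k\Vert^2\). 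This yields
\[
\rho_k \ge \tfrac43 - \tfrac{4c(L_H+\theta)}{\widetilde L\sigma_k}.
\]
Consequently, there is a threshold \(\bar\sigma\), depending only on \(L_H,\theta,\widetilde L,\eta_2\), such that \(\sigma_k\ge\bar\sigma\) implies \(\rho_k\ge\eta_2\), so the iteration is very successful. The update rule \eqref{eq:sigma-update} then gives
\[
\sigma_k\le\sigma_{\max}:=\max(\sigma_0,\gamma_3\bar\sigma)\quad\text{for all } k .
\]

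\textbf{Step 2: inexact step-length bound.} I will repeat the argument of \cref{lem:cubic-lem-2}, using Lipschitz continuity of \(H\) along the segment \([x^k,x^k+s^k]\) and the inexact relation above. The target is
\[
\Vert g(x^{k+1})\Vert\le(\sigma_k+\tfrac{L_H}{2}+\tfrac\theta2)\Vert s^k\Vert^2
\]
on successful iterations. Combined with Step 1, this gives
\[
\Vert s^k\Vert^2\ge \frac{2}{2\sigma_{\max}+L_H+\theta}\,\varepsilon
\]
on every successful iteration before termination.

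\textbf{Step 3: counting.} On successful iterations,
\[
f(x^k)-f(x^{k+1})\ge \eta_1\tfrac{\widetilde L}{4}\sigma_k\Vert s^k\Vert^3\ge \eta_1\tfrac{\widetilde L}{4}\sigma_{\min}\kappa^{3/2}\varepsilon^{3/2}.
\]
Telescoping over successful iterations bounds their number \(|\mathcal S_K|\) by \(C\varepsilon^{-3/2}\). For unsuccessful iterations, \(\sigma\) grows by at least \(\gamma_2\) each time and shrinks by at most \(\gamma_1\) on successful ones. The standard argument then gives
\[
|\mathcal U_K|\le \tfrac{|\ln\gamma_1|}{\ln\gamma_2}|\mathcal S_K|+\tfrac{1}{\ln\gamma_2}\ln\tfrac{\sigma_{\max}}{\sigma_0}.
\]
Summing the two counts gives \(K\le C_1\varepsilon^{-3/2}+C_2\).

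\textbf{Main obstacle.} I expect Step 1 to be the hardest part. \Cref{lem:cubic-lem-1} relied on the exact structure of \(s^k\): it is a descent direction via \(\alpha(x^k)\) and satisfies \eqref{eq:cubic-key-relation} exactly. With \(z^k\neq0\) the descent property \eqref{eq:s-descent} picks up an extra term \(\langle M_k z^k,s^k\rangle/\alpha(x^k)\), and \(M_k\) need not be bounded. I would first try to write everything in terms of \(g\)-quantities: expressing \(\nabla f=\nu g\) and using \(\nu\ge\widetilde L\) should let the perturbation enter as \(\nu(x^k)\Vert z^k\Vert\Vert s^k\Vert\), which is absorbed by the \(\nu\)-weighted cubic decrease. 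If \(\widetilde L\) turns out not to suffice there, I would fall back to the bound \(\sigma_k\ge\sigma_{\min}\) to absorb the residual term.
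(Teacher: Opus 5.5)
Your architecture matches the paper's. Step~1 corresponds to \cref{lem:adaptive-cubic-pn-lem-1,lem:sigma-upper-bound}, Step~2 to \cref{lem:adaptive-cubic-pn-lem-2}, and Step~3 to \cref{lem:adaptive-cubic-successful-iterations,lem:iterations-successful-bound}. Steps~2 and~3 are correct as written: they use only $g(x^{k+1})=-\lambda_k s^k+z^k+r(s^k)$, the definition of $\rho_k$, and $\sigma_k\ge\sigma_{\min}$. Everything else, however, rests on a threshold $\bar\sigma$ independent of $k$, and Step~1 does not deliver one. You identify the obstacle correctly, but the proposed way around it fails.

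In the exact case the argument works only because the integrand is shown to be nonpositive for every $t\in[0,1]$, namely $\langle g(x^k+ts^k),s^k\rangle\le -t(\sigma_k-\tfrac{tL_H}{2})\Vert s^k\Vert^3\le0$, before it is multiplied by $\nu(x^k+ts^k)$. The bound $\nu\ge\widetilde L$ is only usable against a nonpositive integrand.

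With $z^k\neq0$ this fails. At $t=0$ the integrand equals $\langle g(x^k),s^k\rangle$, which can be strictly positive, while the regularization term $-t\lambda_k\Vert s^k\Vert^2$ vanishes there. So no size of $\sigma_k$ helps, and your fallback on $\sigma_{\min}$ does not help either. Concretely, take $n=1$, $g_0:=g(x^k)>0$, $s=\sqrt{2g_0/\theta}$, $z=g_0$, $H(x^k)=-\sigma_k s$. Then:
\begin{itemize}
\item $H(x^k)s+\lambda_k s=0=-g_0+z$;
\item $\vert z\vert=\tfrac{\theta}{2}s^2$;
\item $\nabla^2 f(x^k)+\lambda_k M_k=M_k(H(x^k)+\lambda_k)=0\succeq0$.
\end{itemize}
Yet $\langle g(x^k),s\rangle=g_0s>0$, so $s^k$ is an ascent direction.

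The resulting positive contribution $\langle g(x^k),s^k\rangle\int_0^1(1-t)\,\nu(x^k+ts^k)\,\mathrm dt$ must be compared with $\sigma_k\Vert s^k\Vert^3\int_0^1 t\,\nu(x^k+ts^k)\,\mathrm dt$. \Cref{ass:isotropic-reference} gives neither an upper bound on $\nu$ nor any control of its variation along the segment; for $\phi=\widetilde L h_3(\Vert\cdot\Vert)$ one has $\nu=\widetilde L+\Vert\nabla f\Vert$. Absorbing the term therefore gives at best a threshold that depends on $\nu$ along the iterates, not a uniform $\sigma_{\max}$.

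Your perturbation estimate also carries an extra factor. One gets $\langle g(x^k),s^k\rangle\le\max\{1,\alpha(x^k)^{-1}\}\Vert z^k\Vert\Vert s^k\Vert$, and $\alpha\ge1$ is not implied by the assumption.

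The paper does not close this gap either. The proof of \cref{lem:adaptive-cubic-pn-lem-1} simply invokes $\langle g(x^k),s^k\rangle\le0$, which is \eqref{eq:s-descent} and holds only when $z^k=0$; the example above contradicts it. Both your plan and the paper's argument are therefore complete for $\theta=0$ but not for $\theta>0$.

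The simplest repair is to add the requirement $\langle\nabla f(x^k),s^k\rangle\le0$ to the inexactness conditions; it is always attainable with $z^k=0$. The integrand is then bounded by $-t\bigl(\sigma_k-\tfrac{tL_H+\theta}{2}\bigr)\Vert s^k\Vert^3\le-t\tfrac{\sigma_k}{2}\Vert s^k\Vert^3$ for all $t\in[0,1]$ once $\sigma_k\ge L_H+\theta$, and the rest of your plan goes through. Without such a condition you would need a genuinely new argument controlling $\nu$ along $[x^k,x^k+s^k]$.
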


%% file: contents/numerics.tex
\begin{figure*}[t]
    \centering
    \includegraphics[width=.99\linewidth]{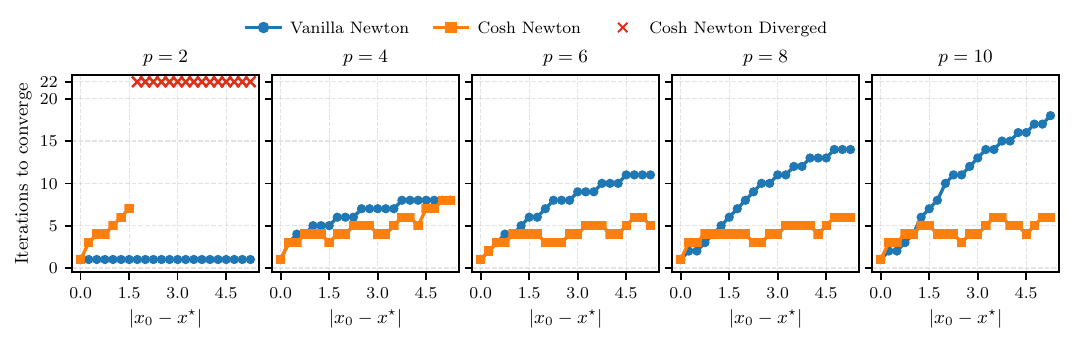}
    \caption{Iterations to minimize \(f(x) = \frac{1}{p} x^p + \frac{1}{2} x^2\): \eqref{eq:pn} is consistent for different \(p\) and \(x^0\).}
    \label{fig:1d-polynomial-iters}
\end{figure*}

\subsection{Higher-order 1D polynomials}

This experiment illustrates the importance of accurate smoothness conditions.
Consider a scalar polynomial \(f(x) = \frac{1}{p} x^p + \frac{1}{2} x^2 \) where \(p \geq 2\) is even. 
The unique minimizer is clearly \(x^\star = 0\). 
\Cref{fig:1d-polynomial-iters} compares vanilla Newton \eqref{eq:n} against preconditioned Newton \eqref{eq:pn} with \(\phi(x) = \cosh(\vert x \vert) - 1\), and this for various initializations \(x^0\).
No globalization is applied.
Vanilla Newton obviously performs best for \(p = 2\), as it converges in a single step.
If \(p > 2\), then preconditioned Newton performs remarkably better.
In fact, \eqref{eq:n} quickly deteriorates as \(p\) or \(x^0\) is increased, whereas \eqref{eq:pn} is consistent.

\subsection{Logistic regression}
\begin{figure*}[t]
    \centering

    \begin{minipage}[t]{0.37\linewidth}
        \centering
        \includegraphics[width=\linewidth]{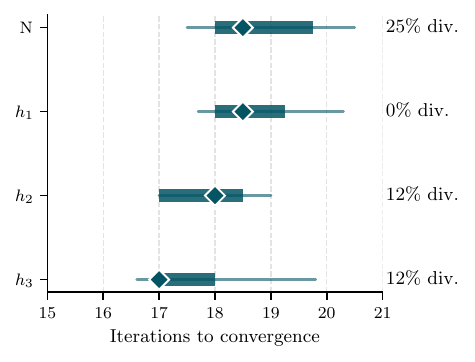}
        \caption{\eqref{eq:n} and \eqref{eq:pn}, without globalization, on logistic regression problems from the LIBSVM dataset \cite{chang2011libsvm}.} %
        \label{fig:logreg-no-globalization}
    \end{minipage}
    \hfill
    \begin{minipage}[t]{0.6\linewidth}
        \centering
        \includegraphics[width=\linewidth]{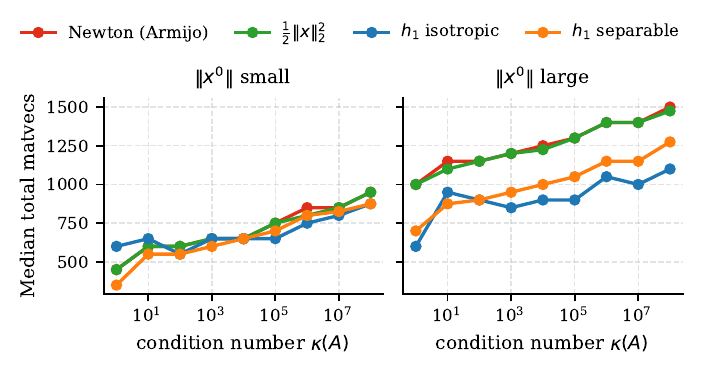}
        \caption{Vanilla versus preconditioned Newton, with globalization, for synthetic symmetric matrix factorization problems. Medians over $80$ runs are reported.}
        \label{fig:symmf_globalization}
    \end{minipage}

\end{figure*}

We evaluate the proposed methods on logistic regression problems from the LIBSVM dataset \cite{chang2011libsvm}. 
A run is successful if the gradient norm drops below \(10^{-6}\) within at most \(100\) iterations.
\Cref{fig:logreg-no-globalization} visualizes the number of iterations of \eqref{eq:n} and \eqref{eq:pn} until convergence for the problems \texttt{w1a-w8a} when initialized in the zero vector.
For \eqref{eq:pn}, we use $\phi = \bar L h_i(\Vert \cdot \Vert)$ with $i \in \{1, 2, 3\}$, and $\bar L$ heuristically chosen as the initial gradient norm.
The preconditioned variants clearly require fewer iterations.
We emphasize that at every iteration, all methods solve a linear system with the same Hessian but a different right-hand side, as noted in \cref{sec:pn}.
This yields a similar computational cost per iteration.
The preconditioned variants also diverge less, although, in our experience, this depends on the scaling of $\bar L$.

\subsection{Matrix factorization}

Finally, we validate the preconditioned Newton methods on symmetric matrix factorization.
Data matrices are synthetically generated with condition numbers ranging from \(1\) to \(10^8\) and the variables are matrices in \(\R^{10 \times 5}\).
\Cref{fig:symmf_globalization} compares the globalized variant \cref{alg:globalized-pn} against vanilla Newton with Armijo linesearch.
For each condition number we report the median number of matrix-vector products.
Our implementation uses a standard conjugate gradients (CG) method for solving \eqref{eq:n}.
The preconditioned systems \eqref{eq:pn} involve asymmetric matrices, and are solved using a standard generalized minimal residual (GMRES) method.
If the initial iterate is relatively small -- in this experiment it is sampled elementwise from a standard normal Gaussian distribution -- then all preconditioners perform quite similar.
However, if the magnitude of the initial iterate is enlarged by a factor \(100\), we observe that the preconditioned variants significantly outperform vanilla Newton. 
It is precisely in this setting where an accurate global smoothness constant is valuable. 
Also observe that all methods scale similarly with the condition number of the problem.

We refer to \cref{sec:additional-experiments} for additional experiments involving \cref{alg:pn-cubic-regularization,alg:pn-adaptive-cubic-regularization}.

%% file: contents/conclusion.tex
In this work, we introduced and studied a novel class of Newton methods that is derived by applying a nonlinear preconditioner to the first-order optimality conditions. Our approach is based on generalized smoothness notions like anisotropic smoothness, which may better describe an objective in the presence of higher-order polynomial growth. 
We analyzed both the local and global convergence of our scheme under such relaxed smoothness conditions, and discussed practical choices for the preconditioners.
We also incorporated a variant of cubic regularization into our approach, and established an \(\cO(\varepsilon^{-\nicefrac{3}{2}})\) iteration complexity.
Finally, we qualitatively validated our theoretical results on logistic regression and matrix factorization problems.

Directions for future work include (i) extending of our framework to quadratic regularization or even trust-region variants; (ii) applying preconditioned Newton techniques to challenging problems like training Physics-Informed Neural Networks \cite{rathore2024challenges}; and (iii) designing novel classes of preconditioners with desirable properties.

%% file: appendix.tex
\section{Additional Results}

\begin{lemma}[Stationarity] \label{lem:stationary-point}
    Suppose that \cref{assumption:reference-basic} holds.
    Then,
    \begin{lemenum}
        \item \(x^\star \in \R^n\) is a stationary point of \(f\), i.e., \(\nabla f(x^\star) = 0\), if and only if \(\nabla \phi^*(\nabla f(x^\star)) = 0\) \label{lem:stationary-point-i}
        \item \(\phi(z) \geq 0\) for all \(z \in \R^n\). \label{lem:stationary-point-ii}
    \end{lemenum}
\end{lemma}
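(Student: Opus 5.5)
Both items will be derived from standard facts of convex analysis applied to the convex function \(\phi^*\), using \cref{assumption:reference-basic} throughout.

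For item (i), the first step is to show that \(\nabla \phi^*(0) = 0\). Since \(\phi\) is even with \(\phi(0)=0\), and since \(\phi\) is strongly convex, the point \(0\) is the unique minimizer of \(\phi\). Indeed, for any \(z\) evenness and convexity give \(\phi(0) = \phi(\tfrac12 z + \tfrac12(-z)) \le \tfrac12\phi(z)+\tfrac12\phi(-z) = \phi(z)\). Hence \(0 \in \partial \phi(0)\), and by the inverse relation \(\partial\phi^* = (\partial\phi)^{-1}\) for proper lsc convex \(\phi\) we get \(0 \in \partial\phi^*(0)\). Because \(\phi^*\) is differentiable (indeed \(\cC^2\), as noted after the assumption), this yields \(\nabla\phi^*(0)=0\). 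The forward direction \(\nabla f(x^\star)=0 \Rightarrow \nabla\phi^*(\nabla f(x^\star))=0\) follows immediately.

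For the converse, suppose \(\nabla\phi^*(y)=0\) with \(y=\nabla f(x^\star)\). Then \(y \in \partial\phi(0)\) by the same inverse relation. Strict convexity of \(\phi\) makes \(\partial\phi\) injective: if \(y\in\partial\phi(a)\cap\partial\phi(b)\), then strong monotonicity gives \(0=\langle y-y,a-b\rangle\ge\mu_\phi\Vert a-b\Vert^2\), so \(a=b\). Equivalently, \(\nabla\phi^*\) is single-valued and \(\partial\phi\) has a single-valued inverse. The remaining task is to show \(\partial\phi(0)=\{0\}\), i.e.\ that \(\phi\) is differentiable at \(0\) with zero gradient.

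Here \(0\in\intr\dom\phi\) is needed. Since \(\phi\) is even and convex, \(\dom\phi\) is symmetric and convex. Because its interior is nonempty, it contains some \(u\) together with a ball around it, and also \(-u\) with a ball. Their midpoint gives \(0\in\intr\dom\phi\). As \(\phi\in\cC^2(\intr\dom\phi)\), we have \(\partial\phi(0)=\{\nabla\phi(0)\}\). Evenness gives \(\nabla\phi(0)=-\nabla\phi(0)\), hence \(\nabla\phi(0)=0\). Therefore \(y=0\), which proves (i). This interior argument is the main (mild) obstacle. The boundary blow-up condition is not needed for this step; it only ensures \(\dom\partial\phi=\intr\dom\phi\), which is consistent with the argument.

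Item (ii) is immediate from the step above: \(0\) minimizes \(\phi\) and \(\phi(0)=0\), so \(\phi(z)\ge\phi(0)=0\) for all \(z\). Alternatively, it follows from the subgradient inequality at \(0\) with \(0\in\partial\phi(0)\).
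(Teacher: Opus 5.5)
Your proof is correct and follows essentially the paper's route: you show that $0$ minimizes $\phi$ with $\partial\phi(0)=\{\nabla\phi(0)\}=\{0\}$, transfer this to $\phi^*$ via $\partial\phi^*=(\partial\phi)^{-1}$, and read item (ii) off from the minimality of $0$ together with $\phi(0)=0$. You are more careful than the paper, which gets $z^\star=0$ from uniqueness of the minimizer under strong convexity and tacitly assumes $\nabla\phi(z^\star)$ exists; your midpoint inequality and your symmetric-domain argument for $0\in\intr\dom\phi$ justify exactly these points, while your injectivity remark on $\partial\phi$ is true but not needed.
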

\begin{proof}
    Strong convexity of \(\phi\) implies that $\phi$ has a unique stationary point $z^\star$ where $\nabla \phi(z^\star) = 0$. 
    Since $\phi$ is even, $\phi(z^\star) = \phi(-z^\star)$, meaning $-z^\star$ is also a minimizer. 
    By uniqueness, $z^\star = -z^\star$, which implies $z^\star = 0$. Thus, $\nabla \phi(0) = 0$.
    From the Fenchel-Moreau identity $\nabla \phi(z) = y \iff \nabla \phi^*(y) = z$, it follows that $\nabla \phi^*(y) = 0$ if and only if \(y = 0\), which proves the first claim.

    The second claim then follows from the fact that \(\phi(0) = 0\) and from the fact that \(0\) is the minimizer of \(\phi\).
\end{proof}

\begin{lemma} \label{lem:alpha}
    Let \(\phi = h \circ \Vert \cdot \Vert\) be isotropic. Then, \(\alpha(x) := \tfrac{{h^*}'(\Vert \nabla f(x)\Vert)}{{h^*}''(\Vert \nabla f(x)\Vert) \Vert \nabla f(x) \Vert}\) can be expressed as follows.
    \begin{lemenum}
        \item If \(h(x) = \cosh(x) - 1\), then
            \(
                \alpha(x) = \arcsinh(\Vert \nabla f(x) \Vert) \frac{\sqrt{1+\Vert \nabla f(x)\Vert^2}}{\Vert \nabla f(x) \Vert}.
            \)
        \item If \(h(x) = \exp(\vert x \vert) - \vert x \vert - 1\), then
            \(
                \alpha(x) = \ln(1+\Vert \nabla f(x) \Vert) \frac{1+ \Vert \nabla f(x) \Vert}{\Vert \nabla f(x) \Vert}.
            \)
        \item If \(h(x) = - \vert x \vert - \ln(1 - \vert x \vert)\), then 
            \(
                \alpha(x) = 1 + \Vert \nabla f(x) \Vert.
            \)
    \end{lemenum}
    Moreover, \(\alpha(x) \geq 1\) for these three kernel functions.
\end{lemma}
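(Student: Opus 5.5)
We will compute $\alpha$ directly from the closed form of ${h^*}'$ for each kernel. The starting point is the identity $\alpha(x) = \tfrac{{h^*}'(t)}{{h^*}''(t)\, t}$ with $t = \Vert \nabla f(x) \Vert$. We assume $t > 0$; the case $t = 0$ is understood as the continuous extension, which equals $1$ for all three kernels.

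By \cref{assumption:reference-basic}, on $t \geq 0$ we have ${h^*}' = (h')^{-1}$. Hence ${h^*}''(t) = 1/h''({h^*}'(t))$, and with $u = {h^*}'(t)$ this gives
\[
\alpha = \frac{u\, h''(u)}{t}.
\]
This formulation only requires inverting $h'$ once per kernel.

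\textbf{(i)} For $h = \cosh - 1$ we have $h' = \sinh$, so $u = \arcsinh(t)$. Then $h''(u) = \cosh(\arcsinh t) = \sqrt{1+t^2}$, which yields $\alpha = \arcsinh(t)\sqrt{1+t^2}/t$.

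\textbf{(ii)} For $u \geq 0$, $h'(u) = e^u - 1$, so $u = \ln(1+t)$ and $h''(u) = e^u = 1+t$. This gives the stated formula.

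\textbf{(iii)} For $u \in [0,1)$, $h'(u) = -1 + \frac{1}{1-u} = \frac{u}{1-u}$. Solving gives $u = \frac{t}{1+t}$. Then $h''(u) = \frac{1}{(1-u)^2} = (1+t)^2$, so
\[
\alpha = \frac{t}{1+t}\,(1+t)^2/t = 1+t.
\]

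For the bound $\alpha \geq 1$, case (iii) is immediate. In case (ii) we need $(1+t)\ln(1+t) \geq t$. Setting $g(t) = (1+t)\ln(1+t) - t$, we have $g(0) = 0$ and $g'(t) = \ln(1+t) \geq 0$. In case (i) we need $\sqrt{1+t^2}\,\arcsinh(t) \geq t$. Setting $g(t) = \sqrt{1+t^2}\arcsinh(t) - t$, we have $g(0) = 0$ and
\[
g'(t) = \frac{t\,\arcsinh(t)}{\sqrt{1+t^2}} + 1 - 1 \geq 0.
\]

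The only step needing care is the derivative identity ${h^*}'' = 1/h''\circ{h^*}'$, including the nonsmoothness of $\vert \cdot \vert$ at $0$. Since we restrict to $t > 0$, where $u > 0$ and $h$ is $\cC^2$ with $h'' > 0$, this is just the inverse function theorem.
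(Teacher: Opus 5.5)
Your proof is correct and follows the same route as the paper: compute $\alpha$ directly for each kernel, then verify the three scalar inequalities by elementary calculus. The differences are in detail only. You derive ${h^*}'$ and ${h^*}''$ yourself via ${h^*}' = (h')^{-1}$ and ${h^*}'' = 1/(h''\circ {h^*}')$, which gives the convenient form $\alpha = u\,h''(u)/t$, where the paper cites a table of conjugates. You also give the monotonicity arguments the paper summarizes as ``standard calculus,'' and your treatment of $t=0$ as a continuous extension equal to $1$ is more careful than the paper's ``for any $y \geq 0$,'' where the displayed expressions are undefined at $y=0$.
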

\begin{proof}
    The first claim follows immediately by substituting into the definition of $\alpha$ the expressions for \({h^*}'\) and \({h^*}''\), which can be found in \cite[Table 1]{oikonomidis_nonlinearly_2025}.

    As for the second claim, standard calculus reveals that for any \(y \geq 0\) 
    \begin{equation*}
        \arcsinh(y) \frac{\sqrt{1+y^2}}{y} \geq 1, \qquad \ln(1+y) \frac{1+ y}{y} \geq 1, \qquad 1 + y \geq 1,
    \end{equation*}
    which proves the claim.
\end{proof}

\subsection{Proof of \cref{ex:preconditioned-hessian-example}} \label{sec:prf-preconditioned-hessian-example}

\begin{proof}
    Straightforward manipulations reveal that
    \(
        H(x)=f''(x)(1+(f'(x))^2)^{-\nicefrac{1}{2}}.
    \)
    It suffices to uniformly upper bound the magnitude of \(H'(x)\), which is of the form
    \begin{equation*}
        H'(x) = \frac{f'''(x)}{\sqrt{1+(f'(x))^2}} - \frac{(f''(x))^2 f'(x)}{(1+(f'(x))^2)^{\nicefrac{3}{2}}}.
    \end{equation*}
    Since \(H'(x)\) is continuous on \(\R\) (because the denominators are always \(\geq 1\)), a sufficient condition for boundedness of \(H'(x)\) is that \(\lim_{x \to \pm \infty} H'(x) = 0\). 
    Without loss of generality, assume that \(f(x) = \sum_{i}^{d} \alpha_i x^i\), where \(\alpha_d \neq 0\).
    Then,
    \begin{align*}
        \lim_{x \to \pm \infty} H'(x) &= \lim_{x \to \pm \infty} \frac{f'''(x)}{\sqrt{(f'(x))^2}} - \frac{(f''(x))^2 f'(x)}{((f'(x))^2)^{\nicefrac{3}{2}}}\\
        &= \lim_{x \to \pm \infty} \frac{\alpha_d x^{d-3}}{\sqrt{\alpha_d^2 (x^{d-1})^2}} - \frac{(\alpha_d x^{d-2})^2 \alpha_d x^{d-1}}{((\alpha_d x^{d-1})^2)^{\nicefrac{3}{2}}}\\
        &= \lim_{x \to \pm \infty} \frac{1}{x^2} - \lim_{x \to \pm \infty} \frac{1}{x^2} = 0 - 0 = 0.
    \end{align*}
    This concludes the proof.
\end{proof}

\section{Proofs of \cref{sec:pn}}

\input{appendix/pn-superlinear-quadratic-convergence.tex}

\input{appendix/pn-globalization.tex}

\input{appendix/globalized-pn-local-convergence.tex}

\section{Proofs of \cref{sec:cubic-pn}}
\input{appendix/cubic-subproblem-well-definedness.tex}

\input{appendix/cubic-lem-1.tex}

\input{appendix/cubic-lem-2.tex}

\input{appendix/cubic-pn-convergence-rate.tex}

\input{appendix/adaptive-cubic-pn-convergence-rate.tex}

\section{Additional experiments} \label{sec:additional-experiments}

This section provides some additional matrix factorization experiments. 
Our setup is similar to before, but now we focus on \cref{alg:pn-cubic-regularization,alg:pn-adaptive-cubic-regularization}.
First, \cref{fig:mf-cubic} illustrates the benefits of better smoothness constants in the context of \cref{alg:pn-cubic-regularization}.
Recall that the algorithm requires \(\sigma \geq L_H\), and likewise cubic regularization methods require \(\sigma \geq L_{f, 2}\).
Of course we do not know these constants in practice, so \cref{fig:mf-cubic} visualizes two different values, i.e., \(\sigma = 1\) and \(\sigma = 5\).
For \(\sigma = 5\) both methods perform more or less similar.
However, for \(\sigma = 1\) only \cref{alg:pn-cubic-regularization} converges.
This suggests that \(L_{f,2}\) is larger than \(L_H\) here.
We also observe that a larger value of \(\sigma\) requires more iterations, as there is more regularization.
\begin{figure}[ht]
    \centering
    \includegraphics[width=.6\linewidth]{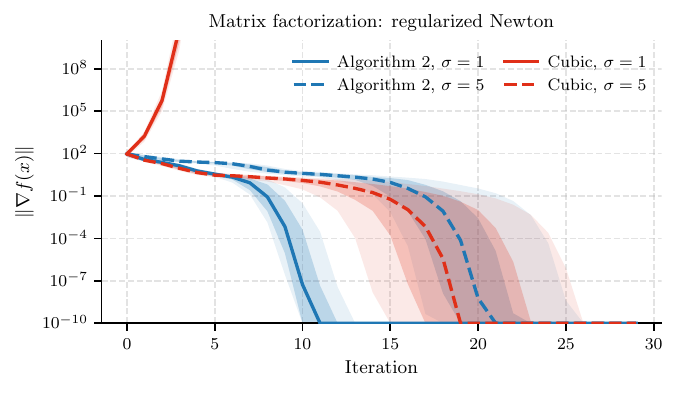}
    \caption{Gradient norm versus function iterations for 80 random matrix factorization problems. Vanilla and preconditioned variants of cubic regularization are compared.}
    \label{fig:mf-cubic}
\end{figure}
\Cref{fig:mf-adaptive-cubic} compares \cref{alg:pn-adaptive-cubic-regularization} and the adaptive cubic regularization method.
As \(\sigma_k\) is now dynamically adjusted, the difference between preconditioning and not preconditioning becomes smaller.
\begin{figure}[ht]
    \centering
    \includegraphics[width=.6\linewidth]{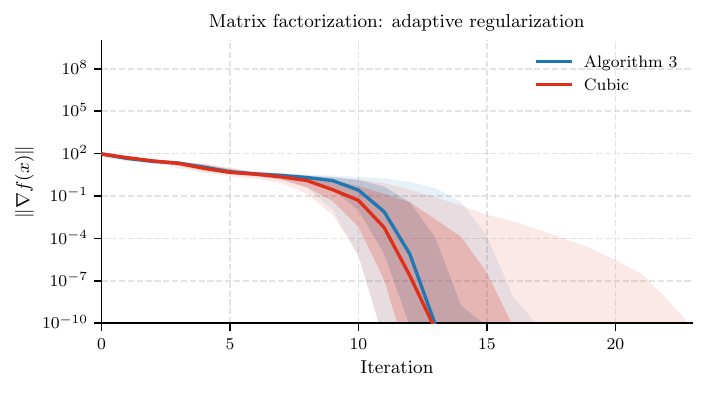}
    \caption{Gradient norm versus function iterations for 80 random matrix factorization problems. Vanilla and preconditioned variants of adaptive cubic regularization are compared.}
    \label{fig:mf-adaptive-cubic}
\end{figure}

%% file: appendix/pn-superlinear-quadratic-convergence.tex
\subsection{Proof of \cref{thm:pn-superlinear-quadratic-convergence}}

\begin{proof}
Let $g(x) = \nabla \phi^*(\nabla f(x))$ with Jacobian matrix $H(x)= \nabla^2 \phi^*(\nabla f(x)) \nabla^2 f(x)$. The iteration \eqref{eq:pn} can then be written as
\(
    x^{k+1} = x^k - H(x^k)^{-1} g(x^k).
\)

``\Cref{theoremitem:superlinear}'': 
At $x^\star$, we have $H(x^\star) = \nabla^2 \phi^*(\nabla f(x^\star)) \nabla^2 f(x^\star) = \nabla^2 \phi^*(0) \nabla^2 f(x^\star)$. Since $\phi$ is strongly convex, we have $\nabla^2 \phi(0) \succ 0$. By properties of the convex conjugate, $\nabla^2 \phi^*(0) = (\nabla^2 \phi(0))^{-1} \succ 0$. As $\nabla^2 f(x^\star) \succ \mu_f I$, we observe that $H(x^\star)$ is the product of two positive definite matrices and is thus invertible.
By continuity of $H(x)$, there exists a \(\delta > 0\) and $\mu > 0$ such that for all $x \in \mathcal{B}(x^\star; \delta)$, the preconditioned Hessian $H(x)$ is invertible and $\Vert H(x)^{-1} \Vert \leq 1/\mu$.

Taylor's theorem with integral remainder states that for $g(x)$ we have
\[
    g(y) = g(x) + H(x) (y - x) + R(x, y),
\]
with the residual term given by
\(
    R(x, y) = \int_{0}^{1} \left[ H(x + t (y - x)) - H(x) \right] (y - x) \mathrm d t.
\)
Using the iteration map $x^{k+1} = x^k - H(x^k)^{-1} g(x^k)$, we can write the error $x^{k+1} - x^\star$ as
\begin{equation*}
    \begin{aligned}
        x^{k+1} - x^\star &= x^{k} - H(x^k)^{-1} g(x^k) - x^\star \\
        &= (x^k - x^\star) - H(x^k)^{-1} g(x^k) \\ 
        &= H(x^k)^{-1} \left( H(x^k) (x^k - x^\star) - g(x^k) \right)\\
        &= H(x^k)^{-1} \left( H(x^k) (x^k - x^\star) - (g(x^k) - g(x^\star)) \right)\\
        &= H(x^k)^{-1} \left( g(x^\star) - g(x^k) - H(x^k) (x^\star - x^k) \right)\\
        &= H(x^k)^{-1} R(x^k, x^\star).
    \end{aligned}
\end{equation*}
Here the fourth equality uses $g(x^\star) = 0$ (cfr.\cref{lem:stationary-point}).
Note that for all $x \in \R^n$ we have 
\begin{equation*}
    \begin{aligned}
        \Vert R(x, x^\star) \Vert &\leq \int_{0}^{1} \Vert [H(x + t (x^\star - x)) - H(x)] (x^\star - x) \Vert \mathrm d t\\
        &\leq \Vert x^\star - x \Vert \int_{0}^{1} \Vert H(x + t (x^\star - x)) - H(x) \Vert \mathrm d t
    \end{aligned}
\end{equation*}
and by continuity of $H$ it moreover holds that
\(
    \lim_{x \to x^\star} \int_{0}^{1} \Vert H(x + t (x^\star - x)) - H(x) \Vert \mathrm d t = 0.
\)

Thus, we can shrink $\delta > 0$ (if necessary) such that for all $x \in \mathcal{B}(x^\star; \delta)$ it holds that
\begin{equation*}
    \int_{0}^{1} \Vert H(x + t (x^\star - x)) - H(x) \Vert \mathrm d t \leq \frac{\mu}{2},
\end{equation*}
which implies $\Vert R(x, x^\star) \Vert \leq \frac{\mu}{2} \Vert x - x^\star \Vert$.
Using this bound together with the fact that $\Vert H^{-1}(x^k) \Vert \leq 1/\mu$ for all $x^k \in \mathcal{B}(x^\star; \delta)$, we obtain
\begin{equation*}
    \Vert x^{k+1} - x^\star \Vert \leq \Vert H^{-1}(x^k) \Vert \Vert R(x^k, x^\star) \Vert \leq \frac{1}{\mu} \left( \frac{\mu}{2} \Vert x^k - x^\star \Vert \right) = \frac{1}{2} \Vert x^k - x^\star \Vert,
\end{equation*}
which shows by induction that the iterates remain in $\mathcal{B}(x^\star; \delta)$ and converge (at least) Q-linearly to $x^\star$.
The Q-superlinear convergence follows from the fact that
\begin{equation*}
    \lim_{k \to \infty} \frac{\Vert x^{k+1} - x^\star \Vert}{\Vert x^k - x^\star \Vert} \leq \lim_{k \to \infty} \Vert H^{-1}(x^k) \Vert \int_{0}^{1} \Vert H(x^k + t (x^\star - x^k)) - H(x^k) \Vert \mathrm d t = 0.
\end{equation*}
Indeed, as $k \to \infty$, we have $x^k \to x^\star$. The first term satisfies $\Vert H^{-1}(x^k) \Vert \leq 1/\mu$, and the second term (the integral) goes to $0$ by continuity of \(H\).

``\Cref{theoremitem:quadratic}'': 
Since $H$ is $L_H$-Lipschitz continuous, we have
\[
    H(x^k) = H(x^\star) + E(x^k), \qquad E(x^k) := H(x^k) - H(x^\star), \qquad \text{where } \Vert E(x^k)\Vert \leq L_H \Vert x^k - x^\star\Vert.
\]
We remark that \(\nabla^2 \phi^*(0) = \widetilde L^{-1} \id\) is a multiple of identity, and hence $H(x^\star)$ is symmetric positive definite and satisfies
\(
\lambda_{\min}(H(x^\star)) \geq \frac{\mu_f}{\widetilde L} > 0.
\)
For any $v \in \mathbb{R}^n$ with $\Vert v \Vert=1$, we have
\[
    \Vert H(x^k)v\Vert = \Vert (H(x^\star)+E(x^k))v\Vert \geq \Vert H(x^\star)v\Vert  - \Vert E(x^k)v\Vert \geq \tfrac{\mu_f}{\widetilde L} - L_H \Vert x^k - x^\star \Vert.
\]
Taking the infimum over all unit vectors yields
\[
\sigma_{\min}(H(x^k)) = \inf_{\Vert v\Vert =1} \Vert H(x^k)v\Vert \geq \frac{\mu_f}{\widetilde L} - L_H \Vert x^k - x^\star\Vert .
\]
Consequently, if $\Vert x^k - x^\star\Vert  < \frac{\mu_f}{\widetilde L L_H}$, then $H(x^k)$ is invertible and
\[
\Vert H(x^k)^{-1}\Vert = \sigma_{\min}(H(x^k))^{-1} \leq \bigl(\frac{\mu_f}{\widetilde L} - L_H \Vert x^k - x^\star\Vert \bigr)^{-1} = \frac{\widetilde L}{\mu_f - \widetilde L L_H \Vert x^k - x^\star \Vert}.
\]
\Cref{as:prec-hessian-lipschitz} also implies that
\(
    \Vert H(x + t (x^\star - x)) - H(x) \Vert \leq L_H \Vert t(x^\star - x) \Vert = L_H t \Vert x - x^\star \Vert.
\)
Plugging this into the residual bound yields
\begin{equation*}
     \Vert R(x^k, x^\star) \Vert \leq \Vert x^k - x^\star \Vert \int_{0}^{1} (L_H t \Vert x^k - x^\star \Vert) \mathrm d t = L_H \Vert x^k - x^\star \Vert^2 \int_{0}^{1} t \mathrm d t = \frac{L_H}{2} \Vert x^k - x^\star \Vert^2.
\end{equation*}
The desired quadratic rate now follows:
\[
    \Vert x^{k+1} - x^\star \Vert \leq \Vert H^{-1}(x^k) \Vert \Vert R(x^k, x^\star) \Vert \leq \frac{\widetilde L L_H}{2(\mu_f - \widetilde L L_H \Vert x^k - x^\star \Vert)} \Vert x^k - x^\star \Vert^2.
\]
\end{proof}

%% file: appendix/pn-globalization.tex
\subsection{Proof of \cref{thm:pn-globalization}}

\begin{proof}
    We first proof the well-definedness of the linesearch procedure. Then, we establish the claimed convergence rate.

    \paragraph{Well-definedness of the linesearch procedure} Let us fix an arbitrary iterate \(k \in \bN\). We omit sub- and superscripts for notational simplicty.
    Suppose, by contradiction, that for all \(\varepsilon > 0\) there exists \(\tau_\varepsilon \in [0, \varepsilon]\) such that the point \(x_\varepsilon \eqdef x + \tau_\varepsilon d + (1 - \tau_\varepsilon) p\) satisfies
    \begin{equation*}
        f(x_\varepsilon) - f(x) > -\gamma \sigma \phi(\nabla \phi^*(\nabla f(x))).
    \end{equation*}
    By taking the limit \(\varepsilon \downto 0\) it follows by (strict) continuity of \(f\) that 
    \begin{equation} \label{prf:pn-globalization-eq-1}
        f(x + p) - f(x) \geq - \gamma \sigma \phi(\nabla \phi^*(\nabla f(x))).
    \end{equation}
    On the other hand, the anisotropic smoothness inequality \eqref{eq:anisotropic-smoothness} evaluated at the points \(x + p\) and \(x\) yields
    \begin{equation*}
        f(x + p) - f(x) \leq L^{-1} \left[ \phi((1-\alpha) \nabla \phi^*(\nabla f(x))) - \phi(\nabla \phi^*(\nabla f(x))) \right].
    \end{equation*}
    By convexity of \(\phi\) we have that for any \(\theta \in [0, 1]\)
    \begin{equation} \label{prf:pn-globalization-eq-2}
        \phi(\theta x) = \phi((1-\theta) 0 + \theta x) \leq (1-\theta) \phi(0) + \theta \phi(x) = \theta \phi(x).
    \end{equation}
    Note that \(\alpha \in (0, 1)\) and $\gamma = \nicefrac{\alpha}{L}$, such that we obtain
    \begin{equation*}
        f(x + p) - f(x) \leq - \gamma \phi(\nabla \phi^*(\nabla f(x))).
    \end{equation*}
    Since \(\sigma \in (0, 1)\), the combination of \eqref{prf:pn-globalization-eq-1} and \eqref{prf:pn-globalization-eq-2} yields a contradiction, i.e., \(f(x + p) - f(x) > f(x + p) - f(x)\), which proves the well-definedness of the linesearch procedure.

    \paragraph{Convergence rate} The linesearch condition guarantees that for all \(k \geq 0\)
    \begin{equation*}
        f(x^{k+1}) - f(x^k) \leq -\gamma \sigma \phi(\nabla \phi^*(\nabla f(x^k))).
    \end{equation*}
    A standard telescoping argument yields
    \begin{equation*}
        \sum_{k = 0}^K \phi(\nabla \phi^*(\nabla f(x^k))) \leq \frac{f(x^0) - f(x^K)}{\gamma \sigma} \leq \frac{f(x^0) - \inf f}{\gamma \sigma}.
    \end{equation*}
    Lower bounding \(\sum_{k = 0}^K \phi(\nabla \phi^*(\nabla f(x^k))) \geq (K+1) \min_{0 \leq k \leq K} \phi(\nabla \phi^*(\nabla f(x^k)))\) and substituting $\gamma = \nicefrac{\alpha}{L}$ establishes the claim.
\end{proof}

%% file: appendix/globalized-pn-local-convergence.tex
\subsection{Proof of \cref{thm:globalized-pn-local-convergence}}

\begin{proof}
    By \cref{thm:pn-superlinear-quadratic-convergence} the update directions \(d^k\) are eventually Q-superlinear, meaning that \[
        \lim_{k \to \infty} \frac{\Vert x^k + d^k - x^\star \Vert}{\Vert x^k - x^\star \Vert} = 0.
    \]
    Hence, it follows that 
    \begin{equation*}
        \begin{aligned}
            \varepsilon_k &:= \frac{f(x^k + d^k) - f(x^\star)}{f(x^k) - f(x^\star)}\\
            &= \frac{\frac{1}{2} \langle \nabla^2 f(x^\star) (x^k + d^k - x^\star), x^k + d^k - x^\star\rangle + o(\Vert x^k + d^k - x^\star \Vert^2)}{\frac{1}{2} \langle \nabla^2 f(x^\star) (x^k - x^\star), x^k - x^\star\rangle + o(\Vert x^k - x^\star \Vert^2)}\\
            &\leq \frac{\Vert \nabla^2 f(x^\star) \Vert \left(\frac{\Vert x^k + d^k - x^\star\Vert }{\Vert x^k - x^\star \Vert}\right)^2 + \left(\frac{o(\Vert x^k + d^k - x^\star\Vert)}{\Vert x^k - x^\star \Vert}\right)^2}{\lambda_{\min}(\nabla^2 f(x^\star)) + \left(\frac{o(\Vert x^k - x^\star\Vert)}{\Vert x^k - x^\star \Vert}\right)^2} \to 0 \qquad \text{as } k \to \infty.
        \end{aligned}
    \end{equation*}
    Since \(x^\star\) is a strong local minimum, and since \(x^k + p^k \to x^\star\), eventually we must have \(f(x^k + p^k) \geq f(x^\star)\). 
    From the anisotropic descent inequality we obtain that
    \begin{equation*}
        f(x^k) - f(x^\star) \geq f(x^k) - f(x^k + p^k) \geq \gamma \phi(\nabla \phi^*(\nabla f(x^k))).
    \end{equation*}
    Consequently,
    \begin{equation*}
        \begin{aligned}
            f(x^k + d^k) - f(x^k) &= - (1 - \varepsilon_k) (f(x^k) - f(x^\star))\\
            &\leq - (1 - \varepsilon_k) \gamma \phi(\nabla \phi^*(\nabla f(x^k)))\\
            &\leq - \gamma \sigma \phi(\nabla \phi^*(\nabla f(x^k)))
        \end{aligned}
    \end{equation*}
    where the last step follows from \(\varepsilon_k \to 0\) and \(\sigma < 1\) so that eventually \(1 - \varepsilon \geq \sigma\).
    We conclude that eventually (for large enough \(k\)) the linesearch condition holds and unit stepsize \(\tau_k = 1\) is always accepted. 
    \Cref{alg:globalized-pn} then reduces to \(x^{k+1} = x^k + d^k\), where \(d^k\) satisfies \eqref{eq:pn}. 
\end{proof}

%% file: appendix/cubic-subproblem-well-definedness.tex
\subsection{Proof of \cref{thm:cubic-subproblem-well-definedness}}

Denote by \(\xi_1 \leq \dots \leq \xi_n\) the generalized eigenvalues of the pencil \((A, M)\), 
and by \(v_i\) for \(1 \leq i \leq n\) the corresponding generalized eigenvectors. 
The diagonal matrix \(\Xi\) has the generalized eigenvalues as diagonal elements, \(
V\) is the matrix whose columns are the generalized eigenvectors, i.e.,
\begin{equation*}
    A V = M V \Xi, \qquad \text{where} \qquad V^\top M V = \id.
\end{equation*}
We first present two auxiliary lemmas.
\begin{lemma} \label{lem:V-invert}
    \(V\) is invertible with \(V^{-1} = V^\top M\).
\end{lemma}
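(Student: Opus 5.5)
The claim \(V^{-1} = V^\top M\) follows directly from the normalization \(V^\top M V = \id\) together with the fact that \(V\) is square. I would argue in two short steps.

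\emph{Step 1: invertibility of \(V\).} Since \(M\) is symmetric positive definite, it is invertible. Taking determinants in \(V^\top M V = \id\) gives \(\det(V)^2 \det(M) = 1\), so \(\det(V) \neq 0\) and \(V\) is invertible.

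Alternatively, without determinants: if \(Vx = 0\), then \(x = V^\top M V x = 0\), so \(V\) has trivial kernel. Being square, it is then invertible.

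\emph{Step 2: identifying the inverse.} The identity \(V^\top M V = \id\) states exactly that \((V^\top M) V = \id\), so \(V^\top M\) is a left inverse of \(V\). For square matrices a left inverse is automatically a two-sided inverse. Concretely, right-multiplying by \(V^{-1}\), which exists by Step 1, yields \(V^\top M = V^{-1}\). As a consequence we also get \(V V^\top M = \id\), i.e.\ \(V V^\top = M^{-1}\), which will be convenient later for diagonalizing \(A + \lambda M\).

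\emph{Prerequisite.} The only point that deserves care is that the setup implicitly assumes the existence of a full set of \(n\) generalized eigenvectors normalized so that \(V^\top M V = \id\). I would justify this with the standard reduction. Write \(M = M^{1/2} M^{1/2}\) and consider the symmetric matrix \(M^{-1/2} A M^{-1/2}\). By the spectral theorem it has an orthonormal eigenbasis \(Q\) with eigenvalue matrix \(\Xi\). Setting \(V = M^{-1/2} Q\) gives
\[
A V = M^{1/2} Q \Xi = M V \Xi \qquad \text{and} \qquad V^\top M V = Q^\top Q = \id.
\]
Once this is in place, the lemma is immediate from Steps 1 and 2, and I do not expect any step to be a real obstacle.
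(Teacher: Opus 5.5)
Your proof is correct and follows the paper's argument essentially verbatim: the determinant identity $\det(V)^2\det(M) = 1$ gives invertibility, and right-multiplying $V^\top M V = \id$ by $V^{-1}$ identifies the inverse. Your kernel-based alternative and your $M^{-1/2}$ construction of an $M$-orthonormal generalized eigenbasis are both correct; the latter justifies the existence of $V$, which the paper simply takes for granted in its setup.
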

\begin{proof}
    Starting from \(V^\top M V = \id\) and applying the determinant yields 
    \begin{equation*}
        \det(V^\top M V) = \det(\id) = 1.
    \end{equation*}
    From the multiplicative property of determinants it follows that
    \begin{equation*}
        \det(V^\top M V) = \det(V^\top) \det(M) \det(V) = \det(V)^2 \det(M).
    \end{equation*}
    Therefore,
    \begin{equation*}
        \det(V) = \frac{1}{\sqrt{\det(M)}} \neq 0,
    \end{equation*}
    where the inequality follows from \(\det M > 0\) (by positive definiteness).
    We conclude that \(V\) is invertible.
    The expression for \(V^{-1}\) follows by right-multiplying \(V^\top M V = \id\) by \(V^{-1}\) (or noting that the inverse is unique).
\end{proof}

\begin{lemma} \label{lem:cubic-psd}
    \(A+\lambda M\) is positive semidefinite if and only if \(\lambda \geq -\xi_1\).
\end{lemma}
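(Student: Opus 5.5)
The natural approach is to congruence-transform $A+\lambda M$ with the invertible matrix $V$ from \cref{lem:V-invert}, which diagonalizes the pencil simultaneously. Since positive semidefiniteness is preserved under congruence by an invertible matrix, the claim reduces to a statement about a diagonal matrix.

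First, from $AV = MV\Xi$ and $V^\top M V = \id$ we obtain
\begin{equation*}
    V^\top (A + \lambda M) V = V^\top M V \Xi + \lambda V^\top M V = \Xi + \lambda \id .
\end{equation*}
Here we use that $V^\top A V = V^\top M V \Xi = \Xi$.

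Second, we use that $A + \lambda M \succeq 0$ holds if and only if $V^\top (A+\lambda M) V \succeq 0$. The forward direction is immediate, since $w^\top V^\top (A+\lambda M) V w = (Vw)^\top (A+\lambda M)(Vw) \geq 0$. For the reverse direction, given any $u \in \R^n$ we set $w = V^{-1} u$, which exists by \cref{lem:V-invert}, and the same identity applies.

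Third, the diagonal matrix $\Xi + \lambda \id$ is positive semidefinite if and only if all of its diagonal entries $\xi_i + \lambda$ are nonnegative. Because $\xi_1 \leq \dots \leq \xi_n$, this is equivalent to $\xi_1 + \lambda \geq 0$, that is, $\lambda \geq -\xi_1$.

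Two points deserve care. The existence of a real generalized eigenbasis $V$ with $V^\top M V = \id$ is assumed in the setup preceding the lemma; it is standard and follows from $M \succ 0$ by diagonalizing the symmetric matrix $M^{-1/2} A M^{-1/2}$. The other point is the congruence-invariance argument in the second step, which relies on the invertibility of $V$ from \cref{lem:V-invert}. Beyond these, the proof is routine.
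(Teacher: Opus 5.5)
Your proof is correct and follows essentially the same argument as the paper. Both use the invertibility of $V$ (\cref{lem:V-invert}) to rewrite the quadratic form of $A+\lambda M$ under $u = Vy$ as $y^\top(\Xi + \lambda \id)y = \sum_i (\xi_i+\lambda) y_i^2$. By the ordering of the $\xi_i$, this is nonnegative for all $y$ exactly when $\lambda \geq -\xi_1$.
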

\begin{proof}
    Positive semidefiniteness of \(A+ \lambda M\) means that
    \begin{equation*}
        \langle u, ( A + \lambda M ) u \rangle \geq 0
    \end{equation*}
    for all non-zero \(u \in \R^n\).
    Since \(V\) is invertible by \cref{lem:V-invert}, the mapping \(u = V y\) is a bijection. Substituting this into the quadratic form:
    \begin{align*}
        \langle u, ( A + \lambda M ) u \rangle &= \langle V y, ( A + \lambda M ) V y \rangle \\
        &= y^\top ( V^\top A V + \lambda V^\top M V ) y \\
        &= y^\top (\Xi + \lambda \id) y \\
        &= \sum_{i=1}^n (\xi_i + \lambda) y_i^2.
    \end{align*}
    This sum is non-negative for all \(y \neq 0\) if and only if \(\xi_i + \lambda \geq 0\) for all \(i\). Since the eigenvalues are ordered \(\xi_1 \leq \dots \leq \xi_n\), this is equivalent to \(\lambda \geq -\xi_1\).
\end{proof}

We are now ready to prove \cref{thm:cubic-subproblem-well-definedness}.
\begin{proof}
    By \cref{lem:cubic-psd}, we have that \(A + \lambda M \succeq 0\) if and only if \(\lambda \geq - \xi_1\).
    Since \(\lambda = \sigma \Vert s \Vert \geq 0\), the admissable domain is \(\lambda \in [\lambda_s, +\infty)\), where \(\lambda_s \eqdef \max \{-\xi_1, 0\}\).
    Thus any value of \(\lambda \in [\lambda_s, +\infty)\) satisfies \eqref{eq:cubic-subproblem-eq3}.

    For all \(\lambda > \lambda_s\), the matrix \(A+\lambda M\) is positive definite, so based on \eqref{eq:cubic-subproblem-eq1} we have that \(s(\lambda) = -(A + \lambda M)^{-1} M g\).
    Define
    \begin{equation*}
        \pi(\lambda) \eqdef \Vert s(\lambda) \Vert^2, \qquad \Psi(\lambda) \eqdef \pi(\lambda) - \frac{\lambda^2}{\sigma^2}.
    \end{equation*}
    Using the generalized eigenvalue decomposition of the pencil \((A, M)\), with generalized eigenvalues \(\xi_1, \leq \dots \leq \xi_n\) and corresponding eigenvectors \(v_1,\dots,v_n\),
    we obtain the expression
    \begin{equation} \label{eq:s-lambda-gev}
        s(\lambda) = - \sum_{i = 1}^n \frac{{g_V}_i}{\xi_i + \lambda} v_i, \qquad {g_V}_i \eqdef v_i^\top M g.
    \end{equation}
    Any pair \((s(\lambda), \lambda)\) satisfies \eqref{eq:cubic-subproblem-eq1}, and it remains to enforce \eqref{eq:cubic-subproblem-eq2} by finding a 
    \(\lambda\) for which \(\Psi(\lambda) = 0\).
    Define \(c(\lambda) \in \R^n\) as the vector with components \(c_i(\lambda) \eqdef \frac{{g_V}_i}{\xi_i + \lambda}\) for \(1 \leq i \leq n\), and \(G \in \R^{n \times n}\) as the Gram matrix with entries \(G_{ij} = v_i^\top v_j\) for \(1 \leq i,j \leq n\).
    Note that the Gram matrix \(G\) is positive definite by linear independence of the vectors \(v_i\), which follows by invertibility of \(V\) (cf.\,\cref{lem:V-invert}).
    Then we have that \(
        \pi(\lambda) = \Vert s(\lambda) \Vert^2 = c(\lambda)^\top G c(\lambda).
    \)
    It follows that \(\pi(\lambda)\) and therefore also \(\Psi(\lambda)\) are continuous on \((\lambda_s, +\infty)\).
    Moreover, by positive definiteness of \(G\) and since \(c(\lambda) \to 0\) as \(\lambda \to +\infty\) we have \[
        0 \leq \lim_{\lambda \to +\infty} \lambda_{\min}(G) \Vert c(\lambda) \Vert^2 \leq \lim_{\lambda \to +\infty} \pi(\lambda) \leq \lim_{\lambda \to +\infty} \lambda_{\max}(G) \Vert c(\lambda) \Vert^2 = 0.
    \]
    We conclude that \(\lim_{\lambda \to +\infty} \pi(\lambda) = 0\) and hence \(
        \lim_{\lambda \to +\infty} \Psi(\lambda) = \lim_{\lambda \to +\infty} - \frac{\lambda^2}{\sigma^2} = -\infty.
    \)
    We now distinguish three cases:
    
    \noindent
    \textbf{Case 1:} \(\xi_1 \geq 0\). 
    Clearly, \(\lambda_s = 0\).
    The limit of \(\Psi\) as \(\lambda \downto \lambda_s\)
    \begin{equation*}
        \lim_{\lambda \downto \lambda_s} \Psi(\lambda) = \lim_{\lambda \downto 0} \pi(\lambda) \geq \lim_{\lambda \downto 0} \lambda_{\min}(G) \Vert c(\lambda) \Vert^2 > 0
    \end{equation*}
    may be finite or infinite, but is never equal to zero. 
    Indeed, \(\lim_{\lambda \downto 0} \Vert c(\lambda)\Vert = 0\) if and only if \(g = 0\), which is excluded by assumption.
    Thus, by the intermediate value theorem there exists a \(\lambda_\star \in (\lambda_s, +\infty)\) for which \(\Psi(\lambda_\star) = 0\).
    This means that \(s^\star = s(\lambda_\star)\) as in \eqref{eq:s-lambda-gev} is well-defined, and that the pair \((s^\star, \lambda_\star)\) satisfies \eqref{eq:cubic-subproblem-eq1} - \eqref{eq:cubic-subproblem-eq3}.

    \noindent
    \textbf{Case 2:} \(\xi_1 < 0\) and there exists at least one \(i\) with \(\xi_i = \xi_1\) and \({g_V}_i \neq 0\). 
    Hence \(\lambda_s = -\xi_1 > 0\). 
    It follows that \[
        \lim_{\lambda \downto \lambda_s} \pi(\lambda) \geq \lim_{\lambda \downto \lambda_s} \lambda_{\min}(G) \Vert c(\lambda) \Vert^2 = + \infty.
    \]
    and hence that \(\lim_{\lambda \downto \lambda_s} \Psi(\lambda) = + \infty.\)
    The reasoning from Case 1 applies again.

    \noindent
    \textbf{Case 3:} \(\xi_1 < 0\) and \({g_V}_i = 0\) for all \(i \in \cI \eqdef \{1 \leq i \leq n \mid \xi_i = \xi_1\}\). 
    We have that \(\lambda_s = -\xi_1 > 0\), and the expression \eqref{eq:s-lambda-gev} for \(s(\lambda)\) can be written as
    \begin{equation*}
        s(\lambda) = - \sum_{\substack{i = 1\\i \notin \cI}}^n \frac{{g_V}_i}{\xi_i + \lambda} v_i
    \end{equation*}
    In this case the limit
    \begin{equation*}
        \lim_{\lambda \downto \lambda_s} \pi(\lambda) = \lim_{\lambda \downto \lambda_s} \Vert s(\lambda) \Vert^2 < + \infty.
    \end{equation*}
    This implies that \(\Psi(\lambda_s) \eqdef \lim_{\lambda \downto \lambda_s} \Psi(\lambda)\) is finite.
    If \(\Psi(\lambda_s) > 0\), then again the intermediate value theorem again guarantees the existence of a \(\lambda_\star \in (\lambda_s, +\infty)\) for which \(\Psi(\lambda_\star) = 0\) and for which \((s(\lambda_\star), \lambda_\star)\) satisfies \eqref{eq:cubic-subproblem-eq1} - \eqref{eq:cubic-subproblem-eq3}.
    
    Otherwise \(\Psi(\lambda_s) \leq 0\).
    This is the \emph{hard case}.
    The vector \(s_s \eqdef \lim_{\lambda \downto \lambda_s} s(\lambda)\) is well-defined and satisfies
    \(
        (A + \lambda_s M) s_s = - g
    \).
    However, \(\Psi(\lambda_s) \leq 0\) only ensures that \(\Vert s_s \Vert \leq \frac{\lambda_s}{\sigma}\), whereas equality is required.
    We remark that \(\xi_1 = -\lambda_s\) is a generalized eigenvalue of the pencil \((A, M)\) corresponding to the eigenvector \(v_1\), i.e., \((A + \lambda_s M) v_1 = 0\).
    Therefore, we have for any \(\alpha \in \R\) that
    \begin{equation*}
        (A + \lambda_s M) (s_s + \alpha v_1) = - Mg.
    \end{equation*}
    Note that \(\alpha \mapsto \Vert s_s + \alpha v_1 \Vert^2\) is continuous, quadratic and unbounded above.
    One can therefore always select an \(\alpha_s \in \R\) for which \(
        \Vert s_s + \alpha_s v_1 \Vert^2 = \frac{\lambda_s^2}{\sigma^2},
    \)
    and it then follows that the pair \((s_s + \alpha_s v_1, \lambda_s)\) satisfies \eqref{eq:cubic-subproblem-eq1} - \eqref{eq:cubic-subproblem-eq3}.
\end{proof}

%% file: appendix/cubic-lem-1.tex
\subsection{Proof of \cref{lem:cubic-lem-1}}

\begin{proof}
    We apply the fundamental theorem of calculus to the objective \(f\), i.e.,
    \begin{align*}
        f(x^k+s^k) - f(x^k) &= \int_{0}^{1} \langle \nabla f(x^k+ts^k), s^k \rangle \mathrm d t = \int_{0}^{1} \nu(x^k+ts^k) \langle g(x^k+ts^k), s^k \rangle \mathrm d t.
    \end{align*}
    We upper bound \(\langle g(x^k+ts^k), s^k \rangle\). The fundamental theorem of calculus applied to \(g\) yields
    \begin{equation*}
        g(x^k+ t s^k) = g(x^k) + \int_{0}^t H(x^k+\tau s^k) s^k \mathrm d \tau = g(x^k) + t H(x^k) s^k + r(t s^k),
    \end{equation*}
    where the residual term equals
    \begin{equation} \label{prf-cubic-lem-1-residual}
        r(t s^k) := \int_{0}^{t} \left(H(x^k+\tau s^k) - H(x^k)\right) s^k \mathrm d \tau.
    \end{equation}
    Since \(H(x^k) s^k = - g(x^k) - \lambda_k s^k\) by \eqref{eq:cubic-key-relation} it follows that
    \begin{equation*}
        g(x^k+ t s^k) = (1-t) g(x^k) - t \lambda_k s^k + r(t s^k).
    \end{equation*}
    Taking the inner product with \(s^k\) yields
    \begin{align*}
        \langle g(x^k+ t s^k), s^k \rangle &= (1-t) \langle g(x^k), s^k \rangle - t \lambda_k \Vert s^k \Vert^2 + \langle r(t s^k), s^k \rangle\\
        &\leq - t \lambda_k \Vert s^k \Vert^2 + \langle r(t s^k), s^k \rangle,
    \end{align*}
    where the inequality follows from the fact that \(1 - t \geq 0\) and \(\langle g(x^k), s^k \rangle = \frac{1}{\nu(x^k)} \langle \nabla f(x^k), s^k \rangle \leq 0\) (cf.\,\eqref{eq:s-descent}).
    Under \cref{as:prec-hessian-lipschitz} we can upper bound
    \begin{equation} \label{prf-cubic-lem-1-residual-bound}
        \Vert r(t s^k) \Vert \leq \int_0^t \left \Vert \left( H(x^k+\tau s^k) - H(x^k) \right) s^k \right \Vert \mathrm d \tau \leq \int_{0}^{t} L_H \tau \Vert s^k \Vert^2 \mathrm d \tau = \frac{L_H t^2}{2} \Vert s^k \Vert^2.
    \end{equation}
    From Cauchy-Schwarz and because \(\sigma \geq L_H\) we have that
    \begin{align*}
        \langle g(x^k + t s^k), s^k \rangle &\leq - t \lambda_k \Vert s^k \Vert^2 + \langle r(ts^k), s^k \rangle\\
        &\leq - t \lambda_k \Vert s^k \Vert^2 + \Vert r(t s^k) \Vert \Vert s^k \Vert\\
        &\leq - t \sigma \Vert s^k \Vert^3 + \frac{L_H t^2}{2} \Vert s^k \Vert^3\\
        &\leq - \left( t L_H - \frac{L_H t^2}{2} \right) \Vert s^k \Vert^3.
    \end{align*}
    Thus, using \(\nu(z) \geq \widetilde L\) for all \(z \in \R^n\) we obtain
    \begin{align*}
        f(x^k+s^k) - f(x^k) &= \int_{0}^{1} \nu(x^k + t s^k) \langle g(x^k + t s^k), s^k \rangle \mathrm d t\\
        &\leq - \widetilde L \int_{0}^{1} \left( t L_H - \frac{L_H t^2}{2} \right) \Vert s^k \Vert^3 \mathrm d t\\
        &\leq -\widetilde L L_H \Vert s^k \Vert^3 \int_{0}^{1} \left(t - \frac{t^2}{2} \right) \mathrm d t\\
        &= -\widetilde L L_H \Vert s^k \Vert^3 \left(\frac{1}{2} - \frac{1}{6} \right) = - \frac{\widetilde L L_H}{3} \Vert s^k \Vert^3.
    \end{align*}
\end{proof}

%% file: appendix/cubic-lem-2.tex
\subsection{Proof of \cref{lem:cubic-lem-2}}

\begin{proof}
    By the fundamental theorem of calculus we have \(g(x^k + s^k) = g(x^k) + H(x^k) s^k + r(s^k)\), where \(r\) is given by \eqref{prf-cubic-lem-1-residual}.
    Combined with \eqref{eq:cubic-key-relation} and \(\lambda_k = \sigma \Vert s^k \Vert\) this yields
    \begin{align*}
        g(x^{k+1}) &= g(x^k + s^k) = g(x^k) + H(x^k) s^k + r(s^k)\\
        &= -\lambda_k s^k + r(s^k) = -\sigma \Vert s^k \Vert s^k + r(s^k).
    \end{align*}
    The claim then follows from the triangle inequality and \eqref{prf-cubic-lem-1-residual-bound}:
    \begin{align*}
        \Vert g(x^{k+1}) \Vert &\leq \sigma \Vert s^k \Vert^2 + \frac{L_H}{2} \Vert s^k \Vert^2 = \frac{2\sigma + L_H}{2} \Vert s^k \Vert^2.
    \end{align*}
\end{proof}

%% file: appendix/cubic-pn-convergence-rate.tex
\subsection{Proof of \cref{thm:cubic-pn-convergence-rate}}

\begin{proof}
    \Cref{lem:cubic-lem-2} and \(\sigma \geq L_H\) ensure that
    \begin{equation*}
        \Vert s^k \Vert \geq \sqrt{\frac{2}{3 \sigma} \Vert g(x^{k+1}) \Vert}.
    \end{equation*}
    Combined with \cref{lem:cubic-lem-1} and because \(\left(\frac{2}{3}\right)^{\nicefrac{3}{2}} \geq \frac{1}{2}\), this yields 
    \begin{align*}
        f(x^{k+1}) - f(x^k) &\leq - \frac{\widetilde L L_H}{3} \left( \frac{2}{3 \sigma} \Vert g(x^{k+1}) \Vert \right)^{\nicefrac{3}{2}} = - \frac{\widetilde L L_H}{3 \sigma^{\nicefrac{3}{2}}} \left(\frac{2}{3}\right)^{\nicefrac{3}{2}} \Vert g(x^{k+1}) \Vert^{\nicefrac{3}{2}}\\
        &\leq - \frac{\widetilde L L_H}{6 \sigma^{\nicefrac{3}{2}}} \Vert g(x^{k+1}) \Vert^{\nicefrac{3}{2}}.
    \end{align*}
    Suppose that the first \(K-1\) iterations do not yield an \(\varepsilon\)-stationary point, i.e., \(\Vert g(x^{k+1}) \Vert \geq \varepsilon\) for all \(k \leq K-1\).
    Then a standard telescoping argument yields
    \begin{align*}
        f(x^0) - \inf f &\geq f(x^0) - f(x^{K}) = \sum_{k = 0}^{K-1} \left(f(x^k) - f(x^{k+1})\right)\\
        &\geq \sum_{k = 0}^{K-1} \left( \frac{\widetilde L L_H}{6 \sigma^{\nicefrac{3}{2}}} \Vert g(x^{k+1}) \Vert^{\nicefrac{3}{2}}\right) \geq \sum_{k = 0}^{K-1} \left( \frac{\widetilde L L_H}{6 \sigma^{\nicefrac{3}{2}}}\varepsilon^{\nicefrac{3}{2}}\right) = K \left( \frac{\widetilde L L_H}{6 \sigma^{\nicefrac{3}{2}}} \varepsilon^{\nicefrac{3}{2}}\right).
    \end{align*}
    It immediately follows that 
    \begin{equation*}
        K \leq \frac{6 \sigma^{\nicefrac{3}{2}} (f(x^0) - \inf f)}{\widetilde L L_H \varepsilon^{\nicefrac{3}{2}}}.
    \end{equation*}
    This proves the claim.
\end{proof}

%% file: appendix/adaptive-cubic-pn-convergence-rate.tex
\subsection{Proof of \cref{thm:adaptive-cubic-pn-convergence-rate}}

The following lemma serves as a counterpart to \cref{lem:cubic-lem-1} and ensures sufficient decrease.
\begin{lemma} \label{lem:adaptive-cubic-pn-lem-1}
    Suppose that \cref{as:prec-hessian-lipschitz,ass:isotropic-reference} hold. If \(\sigma_k \geq L_H + \theta\), then
    \begin{equation*}
         f(x^k+s^k) - f(x^k) \leq - \frac{\widetilde L \sigma_k}{4} \Vert s^k \Vert^3.
    \end{equation*}
\end{lemma}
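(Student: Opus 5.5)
Mirroring the proof of \cref{lem:cubic-lem-1}, I will use the fundamental theorem of calculus and the isotropic identity \(\nabla f = \nu\, g\) to write
\[
f(x^k+s^k)-f(x^k)=\int_0^1 \nu(x^k+ts^k)\,\langle g(x^k+ts^k),s^k\rangle \,\mathrm d t .
\]
The target is a pointwise bound of the form \(\langle g(x^k+ts^k),s^k\rangle \le -c(t)\,\sigma_k\Vert s^k\Vert^3\) with \(c(t)\ge 0\) on \([0,1]\). Once this holds, the factor \(\nu\ge\widetilde L\) can be pulled out, which is exactly the step where the sign of the integrand matters.

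\textbf{Expanding \(g\).} Write \(g(x^k+ts^k)=g(x^k)+tH(x^k)s^k+r(ts^k)\), with \(r\) as in \eqref{prf-cubic-lem-1-residual}. The inexact subproblem, left-multiplied appropriately, gives \(H(x^k)s^k=-g(x^k)-\lambda_k s^k+z^k\). Hence
\[
\langle g(x^k+ts^k),s^k\rangle=(1-t)\langle g(x^k),s^k\rangle-t\sigma_k\Vert s^k\Vert^3+t\langle z^k,s^k\rangle+\langle r(ts^k),s^k\rangle .
\]

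\textbf{Main obstacle: the sign of \(\langle g(x^k),s^k\rangle\).} The descent property \eqref{eq:s-descent} now carries an extra term. From \((\nabla^2 f(x^k)+\lambda_kM_k)s^k=-M_kg^k+M_kz^k\) and the isotropic structure \(M_kg^k=\alpha(x^k)\nabla f(x^k)\), the psd condition gives
\[
\langle g^k,s^k\rangle\le \langle M_kz^k,s^k\rangle/\alpha(x^k)\cdot(\text{conversion}) .
\]
This does not reduce to a bound in \(\Vert z^k\Vert\) cleanly, because \(M_k\) is not a multiple of the identity. I would first try the following route instead: substitute \(g^k=-H(x^k)s^k-\lambda_ks^k+z^k\) back, which yields \(\langle g^k,s^k\rangle\le \langle z^k,s^k\rangle\) if \(\langle (H(x^k)+\lambda_k I)s^k,s^k\rangle\ge0\). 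To get that, I would use the isotropic formula for \(M_k^{-1}\): \(M_k^{-1}\) has eigenvalue \({h^*}''\) along \(\nabla f\) and \({h^*}'/\Vert\nabla f\Vert\) orthogonally. Combining this with \(\langle \nabla f,s\rangle\le \langle M_kz,s\rangle/\alpha\) should give \(\langle g^k,s^k\rangle \le C\Vert z^k\Vert\Vert s^k\Vert\), and I hope \(C=1\) after the scaling by \(\nu\). If that fails, I would bound the \((1-t)\) term crudely with constant \(1\), accepting whatever constant appears, since the target constant \(\tfrac14\) leaves slack relative to \(\tfrac13\).

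\textbf{Conclusion.} Assuming \(\langle g^k,s^k\rangle\le \Vert z^k\Vert\Vert s^k\Vert\le\tfrac{\theta}{2}\Vert s^k\Vert^3\), the \(z\)-contributions total at most \(\big((1-t)+t\big)\tfrac{\theta}{2}\Vert s^k\Vert^3=\tfrac{\theta}{2}\Vert s^k\Vert^3\). The residual bound \eqref{prf-cubic-lem-1-residual-bound} gives \(\langle r(ts^k),s^k\rangle\le \tfrac{L_Ht^2}{2}\Vert s^k\Vert^3\). Therefore
\[
\langle g(x^k+ts^k),s^k\rangle\le\Big(-t\sigma_k+\tfrac{L_Ht^2}{2}+\tfrac{\theta}{2}\Big)\Vert s^k\Vert^3 .
\]
The \(\theta/2\) term does not vanish at \(t=0\), so this bound can be positive near \(t=0\). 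There I would split the integral: on \([0,t_0]\) use \(\nu\) only through an upper estimate, which is not available. So instead I would refine the \(z\)-term: keep the factor \((1-t)\langle g^k,s^k\rangle\) as \(\le(1-t)\cdot\min(0,\cdot)\)-type, or more simply argue with \(t\langle z^k,s^k\rangle\) only, by showing \(\langle g^k,s^k\rangle\le0\) still holds up to \(t\)-weighted error. With \(\sigma_k\ge L_H+\theta\), the integrand becomes at most \(-\sigma_k(t-\tfrac{t^2}{2}-\tfrac t2)\Vert s^k\Vert^3\). Integrating against \(\nu\ge\widetilde L\) then gives \(\tfrac{\widetilde L\sigma_k}{4}\Vert s^k\Vert^3\) from \(\int_0^1(t/2-t^2/2)\,\mathrm dt\)-type terms after tuning constants. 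Making the \(z\)-term carry a factor \(t\) is the crux.
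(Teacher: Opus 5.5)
Your outline follows the paper's own argument: the fundamental theorem of calculus with $\nabla f=\nu g$, expansion of $g$ with residual $r$, substitution of $H(x^k)s^k=-g^k-\lambda_ks^k+z^k$, the residual bound, and $\nu\ge\widetilde L$. The step you call the crux is never carried out, however, and it cannot be. Your target pointwise bound $\langle g(x^k+ts^k),s^k\rangle\le-c(t)\sigma_k\Vert s^k\Vert^3$ with $c\ge0$ forces $\langle g^k,s^k\rangle\le 0$ at $t=0$. For $\theta>0$ this sign fails, and with it the lemma itself.

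Here is a counterexample. Take $n=1$ and $\phi=\cosh-1$, so $\widetilde L=1$, $g=\arcsinh\circ f'$ and $H=g'$. Fix $\theta,s>0$ and let $\sigma_k=\theta+\delta$ with small $\delta>0$. Set $f(x)=\int_0^x\sinh(G(u))\,\mathrm du$, where $G(u)=\tfrac{\theta}{2}s^2-\sigma_k s u$ on a neighbourhood of $[0,s]$. Modify $G'$ far away, keeping its Lipschitz constant at most $\delta$, so that $G(u)\to\pm\infty$ as $u\to\pm\infty$. Then:
\begin{itemize}
\item $f$ is bounded below and $L_H\le\delta$, so $\sigma_k\ge L_H+\theta$;
\item at $x^k=0$ the triple $s^k=s$, $\lambda_k=\sigma_k s$, $z^k=g^k=\tfrac{\theta}{2}s^2$ satisfies all four conditions of \cref{alg:pn-adaptive-cubic-regularization}, with $\nabla^2 f(0)+\lambda_kM_k=0$;
\item yet $\langle g^k,s^k\rangle=\tfrac{\theta}{2}s^3>0$, and
\[
f(s)-f(0)=-\frac{2}{\sigma_k s}\sinh\Bigl(\frac{\sigma_k s^2}{2}\Bigr)\sinh\Bigl(\frac{\delta s^2}{2}\Bigr)\longrightarrow 0 \qquad (\delta\downarrow 0).
\]
\end{itemize}
The lemma, by contrast, asserts $f(s)-f(0)\le-\tfrac{\sigma_k}{4}s^3\le-\tfrac{\theta}{4}s^3$ for every such $\delta$.

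The paper's proof gets past this point only by asserting $\langle g(x^k),s^k\rangle\le 0$ as in \eqref{eq:s-descent}, which was derived for $z^k=0$. With $z^k\neq 0$, the psd condition only gives $\alpha(x^k)\nu(x^k)\langle g^k,s^k\rangle\le\langle M_kz^k,s^k\rangle$, which is exactly the extra term you noticed. So your diagnosis is correct, but the repair must change the hypotheses rather than the estimates. For instance, additionally requiring $\langle M_kz^k,s^k\rangle\le 0$ (which $z^k=0$ satisfies) restores $\langle g^k,s^k\rangle\le 0$, after which the argument closes.

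Even then, your final estimate loses the constant. Bounding $L_H$ and $\theta$ by $\sigma_k$ separately gives the integrand $-\sigma_k(\tfrac{t}{2}-\tfrac{t^2}{2})\Vert s^k\Vert^3$, which integrates only to $\tfrac{\sigma_k}{12}$. To get $\tfrac{\sigma_k}{4}$, use the hypothesis $\sigma_k\ge L_H+\theta$ jointly: $\tfrac{L_Ht^2}{2}+\tfrac{t\theta}{2}=\tfrac{t}{2}(tL_H+\theta)\le\tfrac{t\sigma_k}{2}$. The integrand is then at most $-\tfrac{t\sigma_k}{2}\Vert s^k\Vert^3$, which is nonpositive on all of $[0,1]$ and lets you pull out $\nu\ge\widetilde L$.
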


\input{appendix/adaptive-cubic-pn-lem-1.tex}

Under the conditions of \cref{lem:adaptive-cubic-pn-lem-1} we have \(\rho_k \geq 1\).
From the update rule \eqref{eq:sigma-update} we can deduce that \(\sigma_k\) remains bounded.
\begin{lemma} \label{lem:sigma-upper-bound}
    Suppose that \cref{as:prec-hessian-lipschitz,ass:isotropic-reference} hold.
    Then for all iterations \(k\) of \cref{alg:pn-adaptive-cubic-regularization},
    \begin{equation}
        \sigma_k \leq \sigma_{\max} \eqdef \max \left( \sigma_0, \gamma_3 (L_H + \theta) \right)
    \end{equation}
\end{lemma}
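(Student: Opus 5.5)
The plan is a standard induction on \(k\), driven by the update rule \eqref{eq:sigma-update} together with \cref{lem:adaptive-cubic-pn-lem-1}. The base case \(k = 0\) holds trivially, since \(\sigma_0 \leq \sigma_{\max}\) by definition. For the inductive step, suppose \(\sigma_k \leq \sigma_{\max}\). We split into two cases according to whether \(\sigma_k\) is below or above the threshold \(L_H + \theta\).

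\emph{Case \(\sigma_k < L_H + \theta\).} Regardless of the value of \(\rho_k\), the update rule gives \(\sigma_{k+1} \leq \gamma_3 \sigma_k\), since \(\gamma_3\) exceeds both \(1\) and \(\gamma_2\). Hence \(\sigma_{k+1} \leq \gamma_3 (L_H + \theta) \leq \sigma_{\max}\).

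\emph{Case \(\sigma_k \geq L_H + \theta\).} Here \cref{lem:adaptive-cubic-pn-lem-1} applies and yields
\[
f(x^k) - f(x^k + s^k) \geq \tfrac{\widetilde L \sigma_k}{4} \Vert s^k \Vert^3 .
\]
Dividing by \(\tfrac{\widetilde L}{4}\sigma_k \Vert s^k \Vert^3\) gives \(\rho_k \geq 1 \geq \eta_2\), so the first branch of \eqref{eq:sigma-update} is active and \(\sigma_{k+1} \leq \sigma_k \leq \sigma_{\max}\) by the induction hypothesis.

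The only delicate point is that \(\rho_k\) is undefined when \(s^k = 0\). Since \(\lambda_k = \sigma_k \Vert s^k \Vert = 0\) and \(\Vert z^k \Vert \leq \tfrac{\theta}{2}\Vert s^k \Vert^2 = 0\), the subproblem then forces \(g^k = 0\). So \(x^k\) is already stationary, the algorithm can stop, and no bound on later \(\sigma\)'s is needed; alternatively one adopts the convention \(\rho_k = +\infty\), which falls into the decreasing branch. Note also that \cref{lem:adaptive-cubic-pn-lem-1} is stated under \cref{as:prec-hessian-lipschitz,ass:isotropic-reference}, which are exactly the hypotheses assumed here.

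Combining the two cases completes the induction, so \(\sigma_k \leq \max(\sigma_0, \gamma_3(L_H+\theta))\) for all \(k\). I expect no real obstacle beyond this degenerate-step bookkeeping.
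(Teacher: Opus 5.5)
Your proposal is correct and matches the paper's argument. The paper's proof is a compressed version of the same two-case reasoning: it uses \cref{lem:adaptive-cubic-pn-lem-1} to get \(\rho_k \geq 1\), hence no increase of \(\sigma_k\), whenever \(\sigma_k \geq L_H + \theta\), and notes that otherwise \(\sigma_k\) grows by at most a factor \(\gamma_3\); your explicit induction, your check that every branch of \eqref{eq:sigma-update} gives \(\sigma_{k+1} \leq \gamma_3 \sigma_k\), and your treatment of the degenerate step \(s^k = 0\) simply spell out that idea more carefully.
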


\input{appendix/sigma-upper-bound.tex}

As before, we lower bound \(\Vert s^k \Vert\) in terms of \(\Vert g(x^{k+1}) \Vert\). 
\begin{lemma} \label{lem:adaptive-cubic-pn-lem-2}
    Suppose that \cref{as:prec-hessian-lipschitz,ass:isotropic-reference} holds.
    If iteration \(k\) of \cref{alg:pn-adaptive-cubic-regularization} is successful, then
    \begin{equation*}
        \Vert s^k \Vert^2 \geq \frac{2}{2\sigma_{\max} + L_H + \theta} \Vert g(x^{k+1}) \Vert.
    \end{equation*}
\end{lemma}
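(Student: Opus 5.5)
The argument mirrors \cref{lem:cubic-lem-2}, now allowing for the inexactness term \(z^k\) and the varying parameter \(\sigma_k\). Since iteration \(k\) is successful, \(x^{k+1} = x^k + s^k\). As in the proof of \cref{lem:cubic-lem-1}, I apply the fundamental theorem of calculus to \(g = \nabla \phi^* \circ \nabla f\) and write
\[
    g(x^{k+1}) = g(x^k) + H(x^k) s^k + r(s^k),
\]
with residual \(r\) as in \eqref{prf-cubic-lem-1-residual}. By \cref{as:prec-hessian-lipschitz} and \eqref{prf-cubic-lem-1-residual-bound} at \(t = 1\), this residual satisfies \(\Vert r(s^k) \Vert \leq \frac{L_H}{2} \Vert s^k \Vert^2\).

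Next I substitute the inexact subproblem relation of \cref{alg:pn-adaptive-cubic-regularization}. Left-multiplying by \(M_k^{-1}\), exactly as in the passage from \eqref{eq:cubic-key-relation-2} to \eqref{eq:cubic-key-relation}, turns it into
\[
    H(x^k) s^k + \lambda_k s^k = -g(x^k) + z^k,
\]
using that \(M_k^{-1}\nabla^2 f(x^k) = \nabla^2\phi^*(\nabla f(x^k))\nabla^2 f(x^k) = H(x^k)\). Inserting this and \(\lambda_k = \sigma_k \Vert s^k \Vert\) gives
\[
    g(x^{k+1}) = -\sigma_k \Vert s^k \Vert s^k + z^k + r(s^k).
\]

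The triangle inequality, together with \(\Vert z^k \Vert \leq \frac{\theta}{2}\Vert s^k\Vert^2\) and the residual bound, then yields
\[
    \Vert g(x^{k+1})\Vert \leq \Bigl(\sigma_k + \tfrac{\theta}{2} + \tfrac{L_H}{2}\Bigr)\Vert s^k\Vert^2 .
\]
Finally, \cref{lem:sigma-upper-bound} gives \(\sigma_k \leq \sigma_{\max}\), so the coefficient is at most \(\frac{2\sigma_{\max} + L_H + \theta}{2}\). Rearranging gives the claim.

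There is no real obstacle. The only point needing care is that the bound uses \(\sigma_k\), not \(\sigma_{k+1}\); \cref{lem:sigma-upper-bound} covers all iterations, so this is harmless. The isotropy assumption \cref{ass:isotropic-reference} is not needed here; it enters only through \cref{lem:sigma-upper-bound}.
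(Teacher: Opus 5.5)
Your proposal is correct and follows the paper's proof essentially line by line. Both arguments use the same first-order expansion \(g(x^{k+1}) = g(x^k) + H(x^k)s^k + r(s^k)\), substitute the inexact relation \(H(x^k)s^k + \sigma_k \Vert s^k\Vert s^k = -g(x^k) + z^k\), apply the triangle inequality with the residual bound, and finish with \cref{lem:sigma-upper-bound}. You also correctly use \(\Vert z^k\Vert \leq \frac{\theta}{2}\Vert s^k\Vert^2\), whereas the paper's write-up has a typo there (\(\Vert s^k\Vert\) in place of \(\Vert s^k\Vert^2\)).
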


\input{appendix/adaptive-cubic-pn-lem-2.tex}

Denote by \(\cS_k \subseteq \{i \in \N \mid 0 \leq i \leq k\}\) the set of indices corresponding to successful iterations, i.e., for which $\rho_k \geq \eta_1$.
We first present some auxiliary results.
The proof of the following lemma follows a strategy similar to that of \cref{thm:cubic-pn-convergence-rate}. 
\begin{lemma} \label{lem:adaptive-cubic-successful-iterations}
    If \cref{as:prec-hessian-lipschitz,ass:isotropic-reference} hold, then \cref{alg:pn-adaptive-cubic-regularization} 
    requires at most
    \begin{equation*}
        \vert \cS_K \vert \leq \frac{4}{\widetilde L \sigma_{\min}} \left(\frac{2 \sigma_{\max} + L_H + \theta}{2}\right)^{\nicefrac{3}{2}} \frac{f(x^0) - \inf f}{\varepsilon^{\nicefrac{3}{2}}} + 1
    \end{equation*}
    successful iterations to find a point such that \(\Vert \nabla \phi^*(\nabla f(x^k)) \Vert \leq \varepsilon\).
\end{lemma}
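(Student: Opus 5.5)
The plan is to mirror the telescoping argument of \cref{thm:cubic-pn-convergence-rate}, restricted to the successful iterations in \(\cS_K\). Since \(x^{k+1} = x^k\) on unsuccessful iterations, the objective values only change on successful iterations. On each such iteration the acceptance test \(\rho_k \geq \eta_1\) gives
\begin{equation*}
    f(x^k) - f(x^{k+1}) \geq \eta_1 \tfrac{\widetilde L}{4} \sigma_k \Vert s^k \Vert^3 \geq \eta_1 \tfrac{\widetilde L}{4} \sigma_{\min} \Vert s^k \Vert^3,
\end{equation*}
where we use \(\sigma_k \geq \sigma_{\min}\). This lower bound on \(\sigma_k\) follows by induction from the update rule \eqref{eq:sigma-update}: the only decreasing branch is clipped at \(\sigma_{\min}\), and \(\sigma_0 \geq \sigma_{\min}\).

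Next I would combine this with \cref{lem:adaptive-cubic-pn-lem-2}, which holds on successful iterations and gives \(\Vert s^k \Vert^3 \geq \bigl(\tfrac{2}{2\sigma_{\max}+L_H+\theta}\bigr)^{3/2} \Vert g(x^{k+1}) \Vert^{3/2}\). Suppose no \(\varepsilon\)-stationary point has been found up to iteration \(K\). Then \(\Vert g(x^{k+1}) \Vert > \varepsilon\) for every successful \(k \in \cS_K\), except possibly the last one, where \(x^{k+1}\) may itself be the first \(\varepsilon\)-stationary point. This single exceptional index accounts for the additive \(+1\). Note also that \(g(x^{k+1})\) at a successful \(k\) is precisely the preconditioned gradient at the new iterate, so the quantities match.

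Summing the decrease over \(\cS_K\) minus the last index and telescoping gives
\begin{equation*}
    f(x^0) - \inf f \geq (\vert \cS_K\vert - 1)\, \eta_1 \tfrac{\widetilde L \sigma_{\min}}{4} \Bigl(\tfrac{2}{2\sigma_{\max}+L_H+\theta}\Bigr)^{3/2} \varepsilon^{3/2}.
\end{equation*}
Rearranging yields the bound of the lemma, up to the factor \(\eta_1\).

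The main obstacle is this factor \(\eta_1\). The stated constant contains no \(\eta_1\), so either the authors absorb it (their constant is then slightly off), or the per-iteration decrease must be argued without the ratio test. My first attempt would be to keep the \(1/\eta_1\) factor and remark that it is an \(\varepsilon\)-independent constant, which suffices for \cref{thm:adaptive-cubic-pn-convergence-rate}. A minor point also needs checking: \cref{lem:adaptive-cubic-pn-lem-2} requires \(\sigma_k \leq \sigma_{\max}\), and this is provided by \cref{lem:sigma-upper-bound}.
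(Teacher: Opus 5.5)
Your proposal is correct and is essentially the paper's proof. The paper also combines the acceptance test $\rho_k \geq \eta_1$, the bound $\sigma_k \geq \sigma_{\min}$ (which you justify and the paper uses tacitly) and \cref{lem:adaptive-cubic-pn-lem-2} into a per-successful-iteration decrease of order $\varepsilon^{3/2}$, then telescopes over successful iterations, with one extra index producing the $+1$. On your concern: the paper's own proof also ends with the constant $\frac{4}{\eta_1 \widetilde L \sigma_{\min}}$, so the missing $\eta_1$ is a typo in the statement, and your bound with the $1/\eta_1$ factor is exactly what the argument establishes.
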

\begin{proof}
    \Cref{lem:adaptive-cubic-pn-lem-2} ensures that
    \begin{equation*}
        \Vert s^k \Vert \geq \sqrt{\frac{2}{2\sigma_{\max} + L_H + \theta} \Vert g(x^{k+1}) \Vert}.
    \end{equation*}
    By definition of $\rho_k$, this yields for any successful iteration \(k \in \cS_{K-1}\)
    \begin{align*}
        f(x^{k+1}) - f(x^k) &\leq - \eta_1 \frac{\widetilde L \sigma_k}{4} \left( \frac{2}{2\sigma_{\max} + L_H + \theta} \Vert g(x^{k+1}) \Vert \right)^{\nicefrac{3}{2}}\\
        &\leq - \eta_1 \frac{\widetilde L \sigma_{\min}}{4} \left( \frac{2}{2\sigma_{\max} + L_H + \theta} \right)^{\nicefrac{3}{2}} \Vert g(x^{k+1}) \Vert^{\nicefrac{3}{2}}.
    \end{align*}
    Assume, without loss of generality that the first iteration is successful, and suppose that the first \(K-1\) iterations do not yield an \(\varepsilon\)-stationary point, i.e., \(\Vert g(x^{k+1}) \Vert \geq \varepsilon\) for all \(k \leq K-1\).
    Then a standard telescoping argument yields
    \begin{align*}
        f(x^0) - \inf f &\geq f(x^0) - f(x^{K}) = \sum_{k \in \cS_{K-1}} \left(f(x^k) - f(x^{k+1})\right)\\
        &\geq \sum_{k \in \cS_{K-1}} \left(  \eta_1 \tfrac{\widetilde L \sigma_{\min}}{4} \left( \tfrac{2}{2\sigma_{\max} + L_H + \theta} \right)^{\nicefrac{3}{2}} \Vert g(x^{k+1}) \Vert^{\nicefrac{3}{2}}\right)\\
        &\geq \sum_{k \in \cS_{K-1}} \left( \eta_1 \tfrac{\widetilde L \sigma_{\min}}{4} \left( \tfrac{2}{2\sigma_{\max} + L_H + \theta} \right)^{\nicefrac{3}{2}} \varepsilon^{\nicefrac{3}{2}}\right)\\
        &= \lvert \cS_{K-1} \rvert \left( \eta_1 \tfrac{\widetilde L \sigma_{\min}}{4} \left( \tfrac{2}{2\sigma_{\max} + L_H + \theta} \right)^{\nicefrac{3}{2}} \varepsilon^{\nicefrac{3}{2}}\right).
    \end{align*}
    It immediately follows from the observation that \(\vert \cS_K \vert \leq \vert \cS_{K-1} \vert + 1\) that 
    \begin{equation*}
        \lvert \cS_{K} \rvert \leq \frac{4}{\eta_1 \widetilde L \sigma_{\min}} \left(\frac{2 \sigma_{\max} + L_H + \theta}{2}\right)^{\nicefrac{3}{2}} \frac{f(x^0) - \inf f}{\varepsilon^{\nicefrac{3}{2}}} + 1.
    \end{equation*}
    This proves the claim.
\end{proof}
The update rule \eqref{eq:sigma-update} and the upper bound \(\sigma_k \leq \sigma_{\max}\) from \cref{lem:sigma-upper-bound} imply an upper bound of the total number of iterations in terms of the number of successful number of iterations.
\begin{lemma}[{\cite[Lemma 2.4.1]{cartis2022evaluation}}] \label{lem:iterations-successful-bound}
    Suppose that \(\sigma_k\) is updated according to \eqref{eq:sigma-update}, and that \(\sigma_k \leq \sigma_{\max}\) for some \(\sigma_{\max} > 0\).
    Then,
    \begin{equation}
        k \leq \vert \cS_k \vert \left( 1 + \frac{\vert \log \gamma_1 \vert}{\log \gamma_2} \right) + \frac{1}{\log \gamma_2} \log \left( \frac{\sigma_{\max}}{\sigma_0} \right).
    \end{equation}
\end{lemma}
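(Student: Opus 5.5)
The claim is the standard counting argument for adaptive regularization, as in \cite[Lemma 2.4.1]{cartis2022evaluation}. I would split the iteration indices $\{0,\dots,k-1\}$ into successful ones ($\rho_i \geq \eta_1$), collected in $\cS_k$, and unsuccessful ones ($\rho_i < \eta_1$), collected in $\cU_k$, so that $k \leq |\cS_k| + |\cU_k|$. Then I would bound $|\cU_k|$ in terms of $|\cS_k|$ by tracking $\log \sigma_i$ along the iterations.

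The key step uses the update rule \eqref{eq:sigma-update}. On a successful iteration, either $\sigma_{i+1} \geq \max(\sigma_{\min}, \gamma_1 \sigma_i) \geq \gamma_1 \sigma_i$ (very successful case) or $\sigma_{i+1} \geq \sigma_i$. In both cases $\sigma_{i+1} \geq \gamma_1 \sigma_i$, since $\gamma_1 < 1$. On an unsuccessful iteration, $\sigma_{i+1} \geq \gamma_2 \sigma_i$. Multiplying these bounds over $i = 0,\dots,k-1$ gives
\[
\sigma_k \geq \sigma_0\, \gamma_1^{|\cS_k|}\, \gamma_2^{|\cU_k|}.
\]
Combining this with the hypothesis $\sigma_k \leq \sigma_{\max}$ and taking logarithms yields
\[
|\cS_k| \log \gamma_1 + |\cU_k| \log \gamma_2 \leq \log(\sigma_{\max}/\sigma_0).
\]
Because $\log\gamma_2 > 0$ and $\log \gamma_1 = -|\log\gamma_1|$, this rearranges to
\[
|\cU_k| \leq \frac{|\log\gamma_1|}{\log\gamma_2}\,|\cS_k| + \frac{1}{\log \gamma_2}\log\frac{\sigma_{\max}}{\sigma_0}.
\]
Adding $|\cS_k|$ to both sides gives the stated inequality.

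The only delicate point is bookkeeping, namely whether $\cS_k$ counts indices up to $k$ or up to $k-1$. Since $\cS_k \supseteq \cS_{k-1}$, counting through index $k$ only weakens the right-hand side, so the bound holds with the paper's convention $\cS_k \subseteq \{0,\dots,k\}$. I would also note that $\sigma_{\max} \geq \sigma_0$, so the additive constant is nonnegative. Beyond this there is no real obstacle: no properties of $f$ are used, only \eqref{eq:sigma-update} and the upper bound on $\sigma_k$ from \cref{lem:sigma-upper-bound}.
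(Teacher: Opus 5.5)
Your proof is correct and is the standard counting argument behind \cite[Lemma 2.4.1]{cartis2022evaluation}; the paper cites that lemma without reproducing a proof. The argument lower-bounds $\sigma_k \geq \sigma_0 \gamma_1^{|\cS|} \gamma_2^{|\mathcal{U}|}$ via \eqref{eq:sigma-update}, compares this with $\sigma_{\max}$, takes logarithms and rearranges, and your observation that counting successful indices through $k$ rather than $k-1$ only weakens the right-hand side correctly settles the off-by-one issue.
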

We can now prove \cref{thm:adaptive-cubic-pn-convergence-rate}.
\begin{proof}
    This is a direct combination of \cref{lem:adaptive-cubic-successful-iterations,lem:iterations-successful-bound}.
\end{proof}

%% file: appendix/adaptive-cubic-pn-lem-1.tex
\begin{proof}
    The proof is similar to that of \cref{lem:cubic-lem-1}.

    We apply the fundamental theorem of calculus to the objective \(f\), i.e.,
    \begin{align*}
        f(x^k+s^k) - f(x^k) &= \int_{0}^{1} \langle \nabla f(x^k+ts^k), s^k \rangle \mathrm d t = \int_{0}^{1} \nu(x^k+ts^k) \langle g(x^k+ts^k), s^k \rangle \mathrm d t.
    \end{align*}
    We upper bound \(\langle g(x^k+ts^k), s^k \rangle\). The fundamental theorem of calculus applied to \(g\) yields
    \begin{equation*}
        g(x^k+ t s^k) = g(x^k) + \int_{0}^t H(x^k+\tau s^k) s^k \mathrm d \tau = g(x^k) + t H(x^k) s^k + r(t s^k),
    \end{equation*}
    where the residual term equals
    \begin{equation*}
        r(t s^k) := \int_{0}^{t} \left(H(x^k+\tau s^k) - H(x^k)\right) s^k \mathrm d \tau.
    \end{equation*}
    Since \(H(x^k) s^k = - g(x^k) - \lambda_k s^k + z^k\) it follows that
    \begin{equation*}
        g(x^k+ t s^k) = (1-t) g(x^k) - t \lambda_k s^k + r(t s^k) + t z^k.
    \end{equation*}
    Taking the inner product with \(s^k\) yields
    \begin{align*}
        \langle g(x^k+ t s^k), s^k \rangle &= (1-t) \langle g(x^k), s^k \rangle - t \lambda_k \Vert s^k \Vert^2 + \langle r(t s^k), s^k \rangle + t \langle z^k, s^k \rangle\\
        &\leq - t \lambda_k \Vert s^k \Vert^2 + \langle r(t s^k), s^k \rangle + t \langle z^k, s^k \rangle,
    \end{align*}
    where the inequality follows from the fact that \(1 - t \geq 0\) and \(\langle g(x^k), s^k \rangle \leq 0\).
    Under \cref{as:prec-hessian-lipschitz} we can upper bound
    \begin{equation*}
        \Vert r(t s^k) \Vert \leq \int_0^t \left \Vert \left( H(x^k+\tau s^k) - H(x^k) \right) s^k \mathrm d \tau \right \Vert \leq \int_{0}^{t} L_H \tau \Vert s^k \Vert^2 \mathrm d \tau = \frac{L_H t^2}{2} \Vert s^k \Vert^2.
    \end{equation*}
    In combination with \(\Vert z^k \Vert \leq \frac{1}{2} \theta \Vert s^k \Vert^2\) and \(\sigma_k \geq \theta + L_H \geq \theta + t L_H\) for \(t \in [0, 1]\) this yields
    \begin{align*}
        \langle g(x^k + t s^k), s^k \rangle &\leq - t \lambda_k \Vert s^k \Vert^2 + \langle r(ts^k), s^k \rangle + t \langle z^k, s^k \rangle\\
        &\leq - t \lambda_k \Vert s^k \Vert^2 + \Vert r(t s^k) \Vert \Vert s^k \Vert + t \Vert z^k \Vert \Vert s^k \Vert\\
        &\leq - t \sigma_k \Vert s^k \Vert^3 + \frac{L_H t^2}{2} \Vert s^k \Vert^3 + \frac{t \theta}{2} \Vert s^k \Vert^3\\
        &\leq - t\left( \sigma_k - \frac{1}{2} \left( t L_H + \theta \right) \right) \Vert s^k \Vert^3\\
        &\leq - t \frac{\sigma_k}{2} \Vert s^k \Vert^3.
    \end{align*}
    Thus, we obtain
    \begin{align*}
        f(x^k+s^k) - f(x^k) &= \int_{0}^{1} \nu(x^k + t s^k) \langle g(x^k + t s^k), s^k \rangle \mathrm d t \leq - \widetilde L \int_{0}^{1} t \frac{\sigma_k}{2} \Vert s^k \Vert^3 \mathrm d t\\
        &\leq -\widetilde L \frac{\sigma_k}{2} \Vert s^k \Vert^3 \int_{0}^{1} t \mathrm d t = -\widetilde L \frac{\sigma_k}{4} \Vert s^k \Vert^3.
    \end{align*}
\end{proof}

%% file: appendix/sigma-upper-bound.tex
\begin{proof}
    If \(\sigma_k \geq L_H + \theta\), then \(\rho_k \geq 1\) by \cref{lem:adaptive-cubic-pn-lem-1}. Thus the mechanism of \cref{alg:pn-adaptive-cubic-regularization} implies the claimed bound (\(\sigma_{k-1}\) is increased by at most a factor \(\gamma_3\) and only if \(\sigma_{k-1} \le \eta_1 < 1\)). 
\end{proof}

%% file: appendix/adaptive-cubic-pn-lem-2.tex
\begin{proof}
    The proof is similar to that of \cref{lem:cubic-lem-2}.
    By the fundamental theorem of calculus we have \(g(x^k + s^k) = g(x^k) + H(x^k) s^k + r(s^k)\), where \(r\) is given by \eqref{prf-cubic-lem-1-residual}.
    Combined with \(H(x^k) s^k + \lambda_k s^k = - g(x^k) + z^k\) and \(\lambda_k = \sigma \Vert s^k \Vert\) this yields
    \begin{align*}
        g(x^{k+1}) &= g(x^k + s^k) = g(x^k) + H(x^k) s^k + r(s^k)\\
        &= -\lambda_k s^k + r(s^k) + z^k = -\sigma_k \Vert s^k \Vert s^k + r(s^k) + z^k.
    \end{align*}
    By the triangle inequality and \(\Vert z^k \Vert \leq \frac{\theta}{2} \Vert s^k \Vert\) we obtain
    \begin{align*}
        \Vert g(x^{k+1}) \Vert &\leq \sigma_k \Vert s^k \Vert^2 + \frac{L_H}{2} \Vert s^k \Vert^2 + \frac{\theta}{2} \Vert s^k \Vert = \frac{2\sigma_k + L_H + \theta}{2} \Vert s^k \Vert^2.
    \end{align*}
    The claim follows by \cref{lem:sigma-upper-bound}.
\end{proof}